\newcommand{\ep}{\varepsilon}
\newcommand{\N}{\mathcal{N}}
\newcommand{\E}{\mathbb{E}}
\newcommand{\A}{\mathcal{A}}
\newcommand{\supp}{\textnormal{supp}}
\newcommand{\bitem}{\begin{itemize}}
\newcommand{\eitem}{\end{itemize}}
\newcommand{\benum}{\begin{enumerate}}
\newcommand{\eenum}{\end{enumerate}}
\newcommand{\beq}{\begin{equation}}
\newcommand{\eeq}{\end{equation}}
\newcommand{\beqs}{\begin{equation*}}
\newcommand{\eeqs}{\end{equation*}}
\newtheorem{theorem}{Theorem}[section]
\newtheorem{lemma}[theorem]{Lemma}
\newtheorem{corollary}[theorem]{Corollary}
\newtheorem{thm}{Theorem}[section]
\newtheorem{prop}[thm]{Proposition}
\theoremstyle{thm}
\begin{document}
\title{Permutation methods for factor analysis and PCA}

\date{\today}
\author{Edgar Dobriban\footnote{Department of Statistics, The Wharton School, University of Pennsylvania. E-mail: \texttt{dobriban@wharton.upenn.edu}. 
We thank Andreas Buja, David Donoho, Alexei Onatski, and Art Owen for stimulating discussions. We are grateful to Jingshu Wang for feedback on the manuscript. The first version of the manuscript had the title "Factor selection by permutation".}}
\maketitle
\abstract{
Researchers often have datasets measuring features $x_{ij}$ of samples, such as test scores of students. In factor analysis and PCA, these features are thought to be influenced by unobserved factors, such as skills. Can we determine how many components affect the data? This is an important problem, because it has a large impact on all downstream data analysis. Consequently, many approaches have been developed to address it. 
\emph{Parallel Analysis} is a popular permutation method. It works by randomly scrambling each feature of the data. It 
selects components if their singular values are larger than those of the permuted data. 
Despite widespread use in leading textbooks and scientific publications, as well as empirical evidence for its accuracy, it currently has no theoretical justification. 

In this paper, we show that the parallel analysis permutation method consistently selects the large components in certain high-dimensional factor models. However, it does not select the smaller components.
The intuition is that permutations keep the noise invariant, while ``destroying'' the low-rank signal. This provides justification for permutation methods in PCA and factor models under some conditions. Our work uncovers drawbacks of permutation methods, and paves the way to improvements. 
}

\section{Introduction}

\subsection{Factor Analysis and PCA}
Factor Analysis (FA) and Principal Component Analysis (PCA), the unsupervised discovery of components governing variation in the data, is performed routinely by scientists and social scientists in thousands of studies every year. In FA and PCA, we measure a number $p$ of indicators (features, covariates) for a set of $n$ samples. In \cite{spearman1904general}'s original application, this involved $p$ test scores of $n$ students. In another important application to finance, we measure the return for $n$ assets over $p$ (or $T$) time periods. The goal is to identify the common factors driving variation in the data, such as skills in Spearman's example, or systemic risks in finance. The setup for FA and PCA is similar, while not exactly the same \citep[see e.g.,][]{jolliffe2002principal}, and hence we will focus on factor analysis for clarity in most of the paper. 

Since Spearman's time factor analysis has found a wide range of applications in a variety of fields, becoming one of the most widely used statistical methods. Applications abound in psychology and education \citep{fabrigar1999evaluating, costello2005best, brown2014confirmatory}, public health \citep{goetz2008movement}, management/ marketing \citep{churchill1979paradigm, stewart1981application, parasuraman1988servqual}, economics/ finance \citep{bai2008large}, and genomics \citep{ leek2008general, lin2016simultaneous}.

\begin{figure}
  \centering
  \includegraphics[scale=0.5]{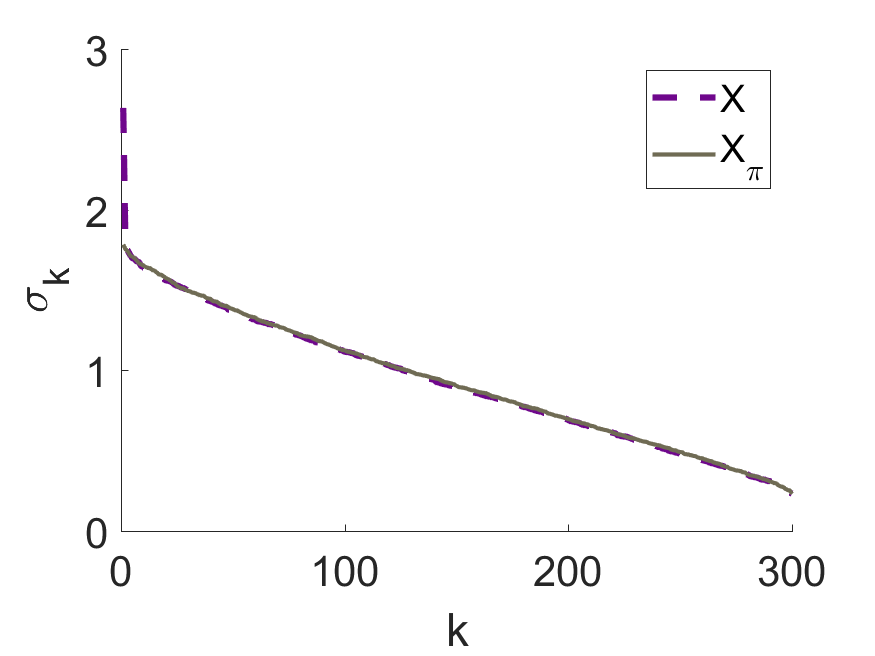}
\caption{Visual representation of the permutation method Parallel Analysis (PA). We have an $n \times p$ data matrix $X$ measuring $p$ features of $n$ datapoints. We want to determine how many unobserved factors or components affect the data. We examine the \emph{scree plot} of the singular values of $X$, i.e., the plot of singular values in a decreasing order. Classical methods such as Cattel's scree plot look for the "elbow" in this plot. Instead of such a subjective rule that may be biased by the judgement of the user, we consider a more objective permutation method. We permute the entries within each column of $X$ independently, possibly several times, to get "null" or "fake" data matrices $X_\pi$. We plot some fixed percentile of the largest, second largest etc., singular values of these matrices. We select the components of $X$ whose singular values are larger than the permuted ones. Here, only one factor is selected.}
\label{fig:PA_vis}
\end{figure}

The most widely used approach to FA relies on the the common-factor model \citep[e.g.,][etc]{thurstone1947multiple,anderson1958introduction}. For each sample $i$, the $j$-th indicator $x_{ij}$ is a linear function of one or more common factors $\eta_{ik}$ and one unique factor (or idiosyncratic noise) $\ep_{ij}$:
\beq
\label{feq}
x_{ij} = \sum_{k=1}^r \eta_{ik} \lambda_{jk} + \ep_{ij}.
\eeq
The factor values $\eta_{ik}$ and the factor loadings $\lambda_{jk}$ are not observed. In Spearman's example, $x_{ij}$ is the test score of student $i$ on test $j$, the $r$  factors are interpreted as skills, $\eta_{ik}$ is student $i$'s level on the $k$-th skill, and $\lambda_{jk}$ is the relevance of the $k$-th skill to the $j$-th test. 

FA is merely one step beyond linear regression. In linear regression, $\eta_{ik}$ are observed, while in FA they are not. This simplicity is deceiving, however, and FA can be surprisingly difficult. A widely cited tutorial on FA notes:  
``\emph{Perhaps more than any other commonly used statistical
method, FA requires a researcher to make a
number of important decisions with respect to how the
analysis is performed}'' \citep{fabrigar1999evaluating}.

One of the key problems in FA is to select the number of factors. For instance, how many skills control test scores? This is well known to have a large impact on the later steps of data analysis \cite[e.g.,][]{hayton2004factor,brown2014confirmatory}. The standard textbook \cite{brown2014confirmatory} calls it ``\emph{the most crucial decision}'' in exploratory FA. 

The factor selection problem is also important in principal component analysis (PCA). While PCA and factor analysis are not the same \citep[see e.g.,][for a clear explanation]{jolliffe2002principal}, in practice permutation methods are very popular to select the number of PCs \citep[e.g.,][]{friedman2009elements,zhou2017eigenvalue}. Our work also bears on PCA. 

Factor models are also well studied in econometrics, \citep[e.g.,][etc]{bai2008large,onatski2009testing,onatski2010determining,fan2011highd,bai2012statistical}. In that setting, the factors are assumed to be so strong that identifying the significant factors is trivial. In contrast, we study settings with weaker, ``emergent'' factors. These are common in the behavioral and biological sciences.

\begin{figure}
\centering
\begin{minipage}{.5\textwidth}
  \centering
  \includegraphics[scale=0.5]{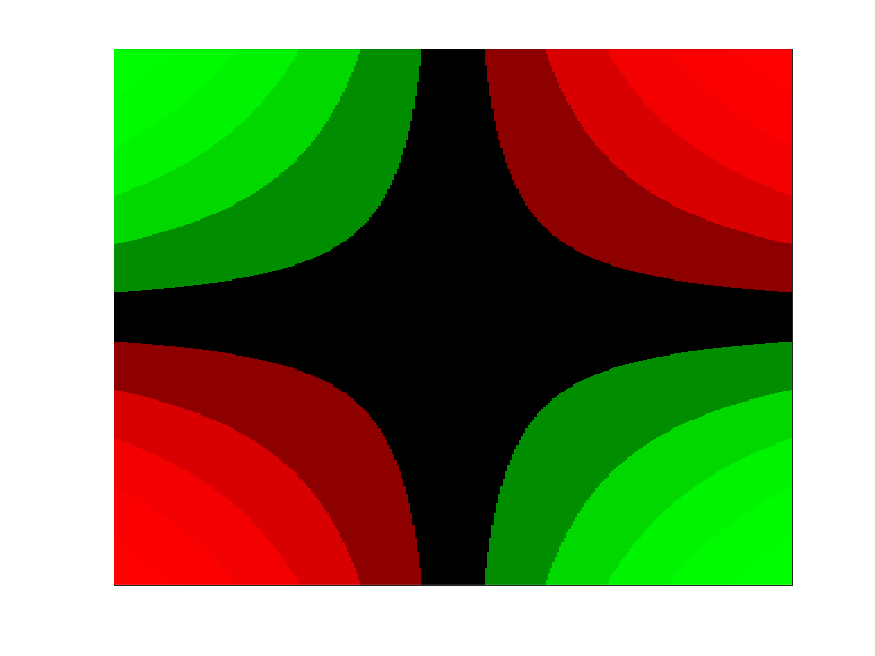}

\end{minipage}%
\begin{minipage}{.5\textwidth}
  \centering
  \includegraphics[scale=0.5]{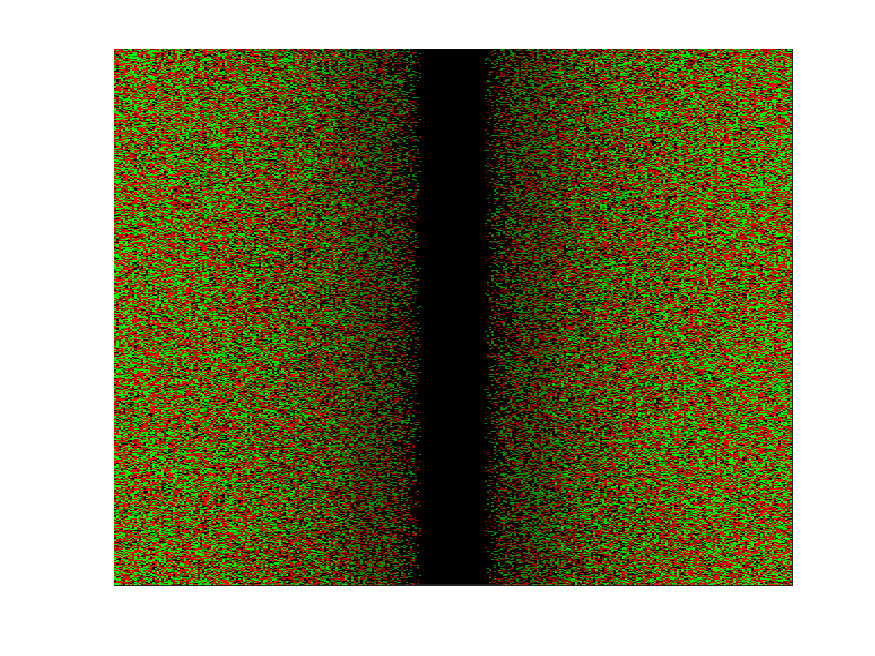}
\end{minipage}
\caption{How does PA work? Given a ``smooth'' signal $S$ of rank one (left), a random permutation transforms it into a ``rough'', noise-like matrix $S_\pi$. The permuted matrix is typically of full rank, and its operator norm is much smaller than that of the signal matrix. Thus, $S_\pi$ does not perturb the permuted noise matrix $N_\pi$ significantly, which allows the estimation of the noise level $\|N_\pi\|_{op} =_d \|N\|_{op}$ (equality in distribution). Then, factors above the noise level are selected.}
\label{fig:sig_des}
\end{figure}

\subsection{Parallel Analysis}

 Parallel Analysis (PA) \citep{horn:1965, buja:eyob:1992} is one of the most popular methods for selecting the number of factors.  In the widely used version proposed by \cite{buja:eyob:1992}, we start with the $n \times p$ data matrix $X$ containing the measurements $x_{ij}$, $i=1,\ldots,n$, $j=1,\ldots,p$. We generate a matrix $X_\pi$ by permuting the entries in each column of $X$ separately.   Here $\pi = (\pi_1,\pi_2,\ldots,\pi_p)$ is a permutation array, which is a collection of permutations $\pi_j$ of $\{1, \ldots, n\}$. The permutation $\pi_j$ permutes the $j$-th column of $X$, so $X_\pi$ has entries $(X_\pi)_{ij} = X_{\pi_j(i),j}$. We repeat this procedure a few times.

We select the first factor if the top singular value of $X$ is larger than a fixed percentile of the top singular values of the permuted matrices. One can use the median, 95\%-th, or 100\%-th percentile. If the first factor is selected, then we repeat the same procedure for the second largest singular value of $X$, comparing with the second singular values of permuted matrices, and so on. We stop when a factor is not selected. 

Figure \ref{fig:PA_vis} illustrates parallel analysis. A data matrix $X$ is randomly permuted, and the singular values of both $X$ and $X_\pi$ are plotted in decreasing order. Only the first singular value of $X$ is larger than that of $X_\pi$, so one factor is selected. In a practical application, one ought to take multiple permutations. 

PA has a lot going for it. It is a simple, concrete method. It is easy to code in software, and it is already implemented in several \verb+R+ packages, including \verb+nFactors+ \citep{raiche2010package}. In addition, there is a great deal of empirical evidence that it works well, compared to other standard methods. The main other methods are Kaiser's ``eigenvalues larger than one'' rule \citep{kaiser1960application}, Bartlett's likelihood ratio test \citep{bartlett1950tests}, the
scree plot \citep{cattell1966scree}, and Velicer's minimum-average-partial criterion (MAP) \citep{velicer1976determining}. Empirical evidence favors PA. In an extensive simulation study, \cite{zwic:veli:1986} concluded that PA and MAP are consistently accurate.  \cite{peres2005many} compared  20 methods for selecting the number of components in factor analysis. They found the PA and its variants are the best methods.  \cite{owen:wang:2016} proposed a bi-cross-validation method, focusing on estimating the factor component. Even  for this new objective function, PA was one of the most accurate methods.

Based on these findings, PA has become a standard textbook method:  
\benum

\item \cite{brown2014confirmatory} notes that PA ``\emph{is accurate in the vast majority of cases}''
\item \cite{hayton2004factor} review evidence from social science and management that PA is ``\emph{one of the most accurate
factor retention methods}'' 

\item \cite{costello2005best} write that PA is ``\emph{accurate and easy to use}''

\item In the context of PCA, \cite{friedman2009elements} use it as the default method for selecting the number of significant components (see Fig 14.24, p. 538).

\eenum

While PA has not been employed enough by practitioners \citep{hayton2004factor,gaskin2014exploratory}, recently it has started to become more widely used by leading researchers in applied statistics, especially in the biological sciences: 

\benum
\item   \cite{leek2008general} use it in a general framework for multiple testing dependence. It is the default method in the popular \verb|sva| package for "Surrogate Variable Analysis"  \citep{leek2007capturing}.


\item Wing H. Wong's group used it to select the number of components when performing dimension reduction while controlling for covariates \citep{lin2016simultaneous}.

\item  \cite{gerard2017unifying} use it in their general methodology for removing unwanted variation (RUV) based on negative controls.

\item  Zhou, Marron and Wright use a block permutation version in eigenvalue significance tests for genetic association \citep{zhou2017eigenvalue}.

\eenum

PA is not the end of the story, and there are newer methods \citep[see e.g.,][]{kritchman2008determining, onatski2012asymptotics,josse2012selecting,gaskin2014exploratory}. However PA is by far the most popular method, and thus it is worth studying.

\subsection{The lack of theory, and this work}

Despite this empirical success, there is essentially no theoretical justification that PA works. For instance, \cite{green2012proposed} calls PA ``\emph{at best a heuristic approach rather than a mathematically rigorous one}''. Clearly, this lack of understanding limits the appeal of PA. The perceived lack of rigor prevents practitioners from making the best decision on which method to use.

In this paper, we will develop the first fully rigorous understanding of PA. We will show that PA selects the large factors in a broad range of factor models. Importantly, PA selects only the factors whose size is above a certain well-specified threshold. The key requirements are that (1) the dimension $p$ is large compared to the sample size $n$, and (2) each factor loads on more than just a few variables. These are quantified into precise mathematical statements. See Thm, \ref{fa_cons} for a clean result. 

The basic idea is simple: The factor model can be written in a signal-plus-noise form $X = S+N$, where $S$ is of low rank. A random permutation ``destroys'' the signal $S$, transforming it into a noise-like matrix (see Fig. \ref{fig:sig_des});  while keeping the noise distribution invariant. This allows the identification of the factors above noise level. 

We hope that our results will de-mystify PA, and help practitioners understand when it is the most suitable method in factor analysis and PCA. We also hope that the mathematical approach developed in this paper will become useful to improve PA, as discussed at the end of the paper. In a follow-up work, we have been able to address several of the limitations of PA \citep{dobriban2017deterministic}.

Roughly speaking, our contributions are as follows:
\benum
\item We provide an asymptotic analysis of PA in ``low-rank-signal plus noise'' models (Sec. \ref{gen_app}). We prove a basic Consistency Lemma (Lem. \ref{cons_lem}) giving general conditions on the signal and the noise when PA selects the large factors, which we call \emph{perceptible}.
\item We then provide concrete assumptions under which the general conditions for the signal (Sec. \ref{sig_mod}) and the noise (Sec. \ref{noise_mod}) hold. This involves new bounds on operator norms of permutation random matrices (Thm. \ref{sig_cond}). 
\item We apply these results to show that PA selects the perceptible factors in factor models (Sec. \ref{simple_app}). For pedagogical reasons, this is presented \emph{before} the general signal-plus-noise approach.

\item We discuss the application of PA in PCA (Sec. \ref{spiked_mod}). We provide numerical evidence supporting our claims (Sec. \ref{nume}), which are all reproducible with software available at \url{github.com/dobriban/PA}. We close with a discussion of future work (Sec. \ref{conc}).
\eenum

\section{Consistency of permutation methods (PA)}

\subsection{A simple result}
\label{simple_app}
We start by presenting a simple consistency result for PA. Recall that we have observations $x_{ij}$ following the standard factor model   \eqref{feq}.  
The vector $x_i = (x_{i1},\ldots,x_{ip})^\top$ of observations for the $i$-th sample can be expressed as 
$$x_i = \Lambda \eta_i +\ep_i,$$
where $\Lambda$ is the $p\times r$ factor loading matrix with entries $\lambda_{jk}$, $\eta_i $ is the $r$-vector of factor values for the $i$-th sample, and $\ep_i$ is the $p$-vector of unique factors.  The factors $\eta_i$ are random variables, while the loadings $\Lambda$ are fixed parameters.
The $n \times p$ matrix $X = (x_1,\ldots,x_n)^\top$ can be written as 
$$X = H\Lambda^\top +\mathcal{E}.$$
Here $H$ is the $n \times r$ matrix containing the factor values $\eta_{ij}$, and $\mathcal{E}$ is the $n \times p$ matrix containing the noise $\ep_{ij}$.  The covariance matrix of one sample $x_i$ is 
$$\Sigma =  \Lambda \Psi  \Lambda^\top +\Phi,$$
where $\Phi = \diag(\Phi_i)$ is the diagonal matrix of idiosyncratic variances.

It is well known that the parameters are not uniquely identified in this model \citep{anderson1958introduction}. It turns out, however, that the number of \emph{large factors} is asymptotically well defined. The key is to quantify \emph{the size of the noise} via the operator norm of the noise matrix. For this, we consider a sequence of factor models where the sample size $n$ or the dimension $p$ grows. In this asymptotic setting, we suppose that we can re-normalize the data so that 
$$c_{n,p}^{-1}\|\mathcal{E}\| \to b>0$$ almost surely (a.s.), or in probability. Here $\|M\|$ denotes the operator norm, the largest singular value of a matrix $M$, and $c_{n,p}$ are some constants. We define $b$ as \emph{the size of the noise}. As we will see, there are many settings where $c_{n,p}$ exists and thus $b$ is well defined. Convergence a.s. and in probability are both considered, and allow for  ``parallel'' theories. 

We define the \emph{above-noise factors} as those whose contribution to variation in the data is larger than the size of the noise. We measure the contribution of the $k$-th factor by the $k$-th largest singular value $\sigma_k(X)$. We say that the $k$-th factor is above-noise  if $c_{n,p}^{-1}\sigma_k(X)>b$ a.s, (or in probability). It may seem surprising that this definition depends on the entire asymptotic setting, and not just on finite values of $n,p$.  However, since our entire approach is asymptotic, this is reasonable. In practice, a factor is above-noise if its effect on variation is larger than the noise. 

In addition, it will be useful to define \emph{perceptible factors}, whose effect on variation differs from the size of the noise by some definite value.  We define \emph{perceptible factors} as those indices $k$ for which $c_{n,p}^{-1}\sigma_k(X)>b+\ep$ a.s (or in probability) for some $\ep>0$. Similarly, we define \emph{imperceptible factors} as those indices $k$ for which $c_{n,p}^{-1}\sigma_k(X)<b-\ep$  for some $\ep>0$.

Our main result is that PA selects the perceptible factors. To state this we will need the \emph{distribution function} of  $\Phi$, the discrete probability mass function placing equal mass on all $\Phi_i$.  For a bounded probability distribution $H$, we will also need the quantity $s(H)=\sup \supp(H)$, the supremum of the support of $H$. For a discrete distribution $H$ taking values on $h_1<h_2<\ldots<h_l$, we have $s(H)=\max_i h_i = h_l$. Let $\Psi^{1/2}$ be the symmetric square root of $\Psi$, and let us define the scaled factor loading matrix $\Lambda\Psi^{1/2} = [f_1,\ldots,f_r]$.

\begin{theorem}[Parallel analysis selects the perceptible factors]
\label{fa_cons}
Suppose we observe $n$ independent samples from the $p$-dimensional factor model $x_i = \Lambda \eta_i +\ep_i$. Assume the following conditions:

\benum

\item {\bf Factors}: The number $r$ of factors is fixed. The factors $\eta_i$ have the form $\eta_i  = \Psi^{1/2} U_i$, where $U_i$ have $r$ independent  entries with mean zero and variance 1.
\item {\bf Idiosyncratic terms}:  The idiosyncratic terms are $\ep_i=\Phi^{1/2} Z_i$, where $\Phi^{1/2}$ is a diagonal matrix, and $Z_i$ have $p$ independent entries with mean zero, variance one, and bounded fourth moment. 
\item  {\bf Asymptotics}: $n,p \to \infty$ such that one of the following conditions holds: 

\benum
\item $p/n \to \gamma >0$, while the distribution function of  $\Phi$ converges weakly to $H$, and $\max \Phi_j \to s(H)$.  The entries of $Z_i$ have bounded 6+$\ep$-th moment.

\item $p/n \to \infty$, while the entries $\Phi_j \le C \tr[\Phi]/p$ for all $j$. 

\eenum

\item {\bf  Factor loadings}: The $r$ vectors of scaled loadings $f_k$ are each bounded, in the sense that $|f_k|_2 \le C n^{1/4-\delta/2}$. They are also delocalized, in the sense that $|f_k|_4/|f_k|_2 \to 0$. 

\eenum 

Then with probability tending to one, \emph{parallel analysis selects all perceptible factors}, and \emph{no separated imperceptible factors}.

\end{theorem}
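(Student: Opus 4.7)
The plan is to reduce to the signal-plus-noise decomposition $X = S + \mathcal{E}$, where $S = H\Lambda^\top = U F^\top$ with $U$ an $n\times r$ matrix of i.i.d.\ mean-zero, unit-variance entries and $F = \Lambda\Psi^{1/2} = [f_1,\ldots,f_r]$, and then to invoke the general Consistency Lemma (Lemma \ref{cons_lem}). Column-wise permutation commutes with this decomposition, giving $X_\pi = S_\pi + \mathcal{E}_\pi$. Crucially, the entries within each column of $\mathcal{E}$ are i.i.d.\ (column $j$ equals $\Phi_j^{1/2}(Z_{1j},\ldots,Z_{nj})^\top$ with i.i.d.\ components), so independent column-wise permutation leaves the joint distribution of $\mathcal{E}$ invariant, and in particular $\|\mathcal{E}_\pi\| \stackrel{d}{=} \|\mathcal{E}\|$. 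The PA threshold therefore inherits the full randomness of the noise operator norm, uncontaminated by any residual signal.

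Next, I would verify that $c_{n,p}^{-1}\|\mathcal{E}\| \to b$ for a deterministic $b>0$ in each asymptotic regime. In regime (a) with $p/n\to\gamma\in(0,\infty)$, Bai--Yin-type results for sample covariance matrices with diagonal population covariance $\Phi$ and entries of bounded $(6+\ep)$-th moment give $b$ as the right edge of a generalized Marchenko--Pastur law determined by $(\gamma,H)$; the hypothesis $\max_j \Phi_j \to s(H)$ rules out isolated outlier singular values from atypical column variances. In regime (b) with $p/n\to\infty$, a different normalization and the uniform bound $\Phi_j \le C\tr[\Phi]/p$ yield the analogous conclusion. These facts are the content of Section \ref{noise_mod}, which I would cite rather than re-derive.

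The crucial and most delicate step is to bound $c_{n,p}^{-1}\|S_\pi\| \to 0$. Writing $S_\pi = \sum_{k=1}^{r}\tilde U^{(k)}\diag(f_k)$, where each matrix $\tilde U^{(k)}$ has independent columns, each an independent permutation of $U_{\cdot,k}$, one must show that the column-wise randomization transforms this structured rank-$r$ matrix into a noise-like object whose operator norm is of strictly lower order than $c_{n,p}$. This is the content of Theorem \ref{sig_cond}, which exploits the delocalization $|f_k|_4/|f_k|_2 \to 0$ (so that the signal is spread across many coordinates and permutation genuinely disperses it) together with the size bound $|f_k|_2 \le C n^{1/4-\delta/2}$. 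I expect this to be the main obstacle, since it is a genuinely new random matrix estimate on column-permuted low-rank matrices, outside the reach of standard Bai--Yin arguments.

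Finally, Weyl's inequality gives $|\sigma_k(X_\pi) - \sigma_k(\mathcal{E}_\pi)| \le \|S_\pi\| = o(c_{n,p})$, and since the top finitely many singular values of a noise matrix share the same right-edge limit, $\sigma_k(\mathcal{E}_\pi)/c_{n,p} \to b$ for each fixed $k$; combined with the distributional equality above, $\sigma_k(X_\pi)/c_{n,p} \to b$ in probability. A perceptible factor satisfies $\sigma_k(X)/c_{n,p} > b+\ep$ by definition and so exceeds $\sigma_k(X_\pi)/c_{n,p}$ with probability tending to one, hence PA selects it; a separated imperceptible factor satisfies the reverse inequality and is rejected. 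Because singular values are ordered, the sequential stopping rule of PA is automatically consistent with these per-index comparisons, completing the argument.
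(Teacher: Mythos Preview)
Your overall architecture matches the paper's proof exactly: decompose $X=S+\mathcal{E}$, verify permutation invariance and operator-norm convergence of the noise via Prop.~\ref{noise_op_norm}, show $\|S_\pi\|\to 0$ via Theorem~\ref{sig_cond}, and conclude via the Consistency Lemma. The final paragraph using Weyl's inequality and the definition of perceptible/imperceptible factors is also how the paper closes.

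There is, however, one step you have glossed over that the paper treats explicitly. Theorem~\ref{sig_cond} is stated for signals of the form $S = n^{-1/2}\theta\cdot 1v_0^\top + \sum_i \theta_i u_i v_i^\top$ with the orthogonality constraint $u_i^\top 1 = 0$ and the hypothesis $\theta\to 0$. The columns $U_{\cdot,k}$ of the random factor matrix do \emph{not} satisfy $U_{\cdot,k}^\top 1 = 0$; their entries are i.i.d.\ mean-zero, so the sum is $O_P(n^{1/2})$, not zero. The paper therefore splits each rank-one term $n^{-1/2}u_k f_k^\top$ into its projection onto $\mathrm{span}(1)$ and the orthogonal complement $P u_k$, where $P = I - n^{-1}11^\top$. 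The mean term has operator norm $n^{-1/2}|u_k|_2|f_k|_2\cdot|\tilde u_k^\top \mathrm{e}|$, and showing this tends to zero uses the LLN (for $n^{-1/2}|u_k|_2\to 1$), the growth bound $|f_k|_2\le Cn^{1/4-\delta/2}$, and the CLT (for $n^{-1/2}\sum_i u_{ik} = O_P(1)$). Only after this subtraction does Theorem~\ref{sig_cond} apply to the centered piece $n^{-1/2}(Pu_k)f_k^\top$, and it is here that the delocalization condition $|f_k|_4/|f_k|_2\to 0$ enters via the $C_2(v)$ bound. Your proposal jumps directly to ``this is the content of Theorem~\ref{sig_cond}'' without acknowledging that the hypotheses of that theorem are not met until this centering is performed; this is the one genuine omission.
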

The proof of the theorem follows from the more general approach developed below, and is given later in Sec. \ref{fa_cons_pf}. 

Importantly, PA selects only the sufficiently large factors, whose size is above a certain well-specified threshold. While in general there we are not aware of a simple description of how large the factors must be to be selected, in the special case of spiked models, the thresholds become much more explicit, see Corollary \ref{PA_spiked}.

The theory allows growing factors, but only at the rate $|f_k|_2^2 \sim n^{1/2-\delta}$. In econometics, \citep[e.g.,][etc]{bai2008large,onatski2009testing}, the factors grow linearly at rate $n$, so the current theory is weaker. However, PA is actually used more in computational genomics and social science,  where the factors are typically much weaker. So, we think that the current theoretical results are a good first step. In future work it would be important to examine the issue of strong factors in more detail.

Assumption 3(a) is somewhat technical. We assume that the distribution function of  $\Phi$ converges weakly to a certain limit probability distribution $H$. This means that there is a certain ``regularity'' in the overall distribution of the variances. Mathematically, it is a standard assumption in random matrix theory \citep{bai2009spectral, yao2015large}. This and $\max \Phi_j \to s(H)$ are technical assumptions needed to guarantee that the \emph{size of the noise} $b>0$ is well defined. 

The conclusion is that under reasonable conditions, PA selects the perceptible factors with high probability. A key feature of the theorem is that it allows both the sample size $n$ and the dimension $p$ to be large. Both $p/n \to \gamma >0$ and $p/n \to \infty$ are handled, so  that $p$ can be much larger than $n$. However $p/n\to 0$ is not handled, and we will see in simulations that PA does not always work in this regime. This is one of the main conclusions of the current paper: \emph{PA works well when the dimension is large.} This can be interpreted as a blessing of dimensionality.

The intuitive explanation is that when $p$ is small, the factor loadings increase the effective variance of the features of the data. Thus, when the features are permuted, the variances are overestimated. Hence, the true noise level is overestimated, and some smaller factors may not be detected. However, this heuristic argument at least indicates that PA will likely be \emph{conservative} in this case.

A second key feature is that the factor loading vectors $\lambda_j$ need  to load on more than just a few variables. Suppose for simplicity that we have an \emph{orthogonal factor model}, $\Psi = I_r$, so $f_k = \lambda_k$. The formal requirement is that the ratio of the $\ell_4$ and $\ell_2$ norms vanishes: $|\lambda_j|_4/|\lambda_j|_2 \to 0$. For instance, $\lambda_j = (1,0,0,\ldots)$ does not satisfy this, but $\lambda_j = p^{-1/2}(1,1,$ $\ldots)$ does. In fact, $\lambda_j$ can have nonzero loadings on a vanishing fraction $\delta$ of the entries, as long as $n\delta\to\infty$, and the entry sizes are lower bounded.  An interesting example is a \emph{clustering} pattern, where the $\lambda_{jk} = |S_j|^{-1/2} I(k\in S_j)$, and $S_j$ are mutually disjoint clusters with sizes $|S_j|\to \infty$\footnote{We thank Jingshu Wang for this example.}.

Thus, our assumptions are not restrictive. In practice, they mean that the loadings cannot concentrate on a small number of variables.  This assumption is similar to non-sparsity, delocalization, or incoherence conditions seen in other works. This is the second main conclusion of the current paper: \emph{PA works well when the factors load on more than just a few variables.} 

In summary, our main conclusion is that PA works well when:

\benum

\item the dimension of the data is large, and
\item the factors load on more than just a few variables
\eenum

Finally, this result concerns only separated factors, for which $c_{n,p}^{-1} \sigma_k(X) > b+\ep$ or $c_{n,p}^{-1} \sigma_k(X) < b-\ep$, but not factors near the noise level. Intuitively, these latter are ``hard to distinguish'' from the noise. This is related to the difficulty of understanding the critical regime of spiked models \citep[e.g.,][]{yao2015large}. At the moment, we do not have a clear understanding of PA in the critical regime. 

In addition, we note that the new theoretical guarantees for PA cover roughly the same regime as when some of the existing methods based on eigenvalues are known to work \citep{kritchman2008determining, onatski2012asymptotics}. However, PA is a very popular method, used widely in science, and thus it is important to understand what it does.  

\subsection{The general approach: signal-plus-noise matrices}
\label{gen_app}
We now shift to a more general approach, which will be used in the rest of the paper. We will work with \emph{signal-plus-noise} matrices $X = S+N$, where $S$ is an $n\times p$ signal matrix of low rank, and $N$ is an $n\times p$ noise matrix. In the standard factor model, $X = H\Lambda^\top +\mathcal{E}$. The first term is the signal due to the factor component, whose rank is at most $r$. Thus the factor model falls into the signal-plus-noise framework.

PA works with the permuted matrices $X_\pi$. By linearity, $\pi$ acts separately on $S$ and $N$, so $X_\pi = S_\pi+N_\pi$.  Intuitively, permutations keep the noise distribution unchanged, while ``destroying'' the signal. Think of $S$ as a ``smooth'' matrix, which can be achieved by reordering rows and columns. A typical permutation $\pi$ transforms this into a ``rough'', ``noise-like'' matrix $S_\pi$. See Figure \ref{fig:sig_des}. This has the same entries as $S$, but is typically of full rank. While the Frobenius norm (sum of squared entries) is preserved, the operator norm can be dramatically reduced. Symbolically: 
$$X_\pi = S_\pi+N_\pi \approx N_\pi.$$
Therefore, $X_\pi$ behaves like the noise $N_\pi$. If the noise is sufficiently "invariant under permutations", then it may be possible to estimate its "size". Write $X=_dY$ if the random variables $X,Y$ have the same distribution, and suppose that  $N_\pi=_d N$. Then from the previous two equations, 
$$\|X_\pi\| \approx_d \|N\|.$$
Thus, the operator norms of the permuted matrix $X_\pi$ and the noise matrix are roughly the same. Selecting factors whose singular values are larger than $|X_{\pi}|$ is roughly the same as comparing to the operator norm of the noise. This provides an intuitive justification that PA selects the perceptible factors, as defined above. The rest of the paper makes this intuition precise. 

\subsubsection{The consistency lemma}

The first step is to formalize the above argument into a rigorous \emph{consistency lemma}. This is a general result that gives broad conditions for the \emph{signal} and the \emph{noise} under which PA is consistent. We will then give examples when the two sets of conditions hold. 

In the signal-plus-noise model $X=S+N$, suppose $S$ is deterministic and $N$ is random; this can be achieved by conditioning on $S$. We want to provide a result that holds under a variety of asymptotic settings. Thus, consider an asymptotic setting $\mathcal{A}$, for instance 
\benum

\item Classical asymptotics, where $n \to \infty$ and $p$ is fixed

\item Proportional-limit asymptotics, where $n,p \to \infty$, such that $p/n\to\gamma>0$. This is also known as high-dimensional asymptotics, random matrix asymptotics, or the thermodynamic limit  \citep[e.g.,][]{paul2014random, yao2015large, dobriban2015high}.

\item ``Big $n$, bigger $p$'' asymptotics, where $n,p \to \infty$, such that $p/n\to\infty$, 


\eenum

Our consistency lemma does not depend on the specific type of asymptotics. It applies to all of the above settings. 

Next, fix a mode of stochastic convergence, either convergence almost surely (a.s.), or in probability. Below we will use a.s., but the equivalent results hold for in probability, \emph{mutatis mutandis}. We will use both in the application to factor models. 

In the asymptotic setting $\mathcal{A}$, suppose that the signal matrix belongs to some parameter space $S \in \Theta$, and the noise has some distribution $N\sim P_N$. Suppose that we have re-normalized the data such that under $\mathcal{A}$, $$\|N\| \to b>0$$ a.s. As in the special case of factor models discussed above, we define the \emph{above-noise factors} as those indices $k$ for which $\sigma_k(X)>b$ a.s. 

Consider also a distribution of permutation arrays $\Pi$, defined for all $n,p$ of interest; for instance each permutation $\pi_j$ sampled independently and uniformly from the set of all permutations of $[n] = \{1,\ldots,n\}$. 

Before turning to usual PA, it is pedagogically helpful to define \emph{asymptotic PA} as a theoretical version of PA leading to a particularly simple analysis. Consider a finite set of permutation arrays $\Pi_0$ sampled independently, each according to $\Pi$. Asymptotic PA selects those factors for which $\sigma_k(X)>\max_{\pi \in \Pi_0} \|X_\pi\|$ a.s. This definition depends on the entire asymptotic setting $\A$, not just on finite values of $n,p$. In finite samples, we instead select the factors for which $\sigma_k(X)>\max_{\pi \in \Pi_0} \|X_\pi\|$. Asymptotic PA is an ``oracle method'', but it leads to very elegant results. The second definition is practically feasible, and we will see that the results are still nice. 

As we will see, in our setting selecting the factors above the 95th percentile of $\{\|X_\pi\|:\pi\in\Pi_0\}$ leads to an entirely equivalent method. This is because the values $\|X_\pi\|, \pi\in\Pi_0$ all converge a.s. to the same value. The difference is only important for the properties of PA as a hypothesis testing method, specifically its control of the type I error. Thus, we focus on asymptotic PA as defined above: 

\begin{lemma}[Consistency lemma]
\label{cons_lem}
Suppose the following

\benum
\item{\bf Noise invariance}: The distribution of the noise is invariant under permutations, so $N =_d N_\pi$, where the equality in distribution is taken with respect to the joint randomness of the noise matrix  $N\sim P_N$ and the independently chosen permutation $\pi \sim \Pi$. 

\item {\bf Signal destruction}: Under the asymptotics $\mathcal{A}$, we have $\|S_\pi\| \to 0$ a.s., for all $S \in \Theta$, where the randomness is induced by the random permutation  $\pi \sim \Pi$. 

\eenum

Then, asymptotic parallel analysis is consistent for selecting the above-noise factors. 
\end{lemma}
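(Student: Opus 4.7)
The strategy is to reduce both sides of the PA decision rule to the deterministic threshold $b$. The definition of above-noise factors already pins down the behavior of $\sigma_k(X)$, so the heart of the proof is to show that the permutation benchmark $\max_{\pi \in \Pi_0}\|X_\pi\|$ also converges to $b$. The key tool is the triangle inequality on the operator norm,
\beqs
\bigl|\|X_\pi\| - \|N_\pi\|\bigr| \le \|S_\pi\|,
\eeqs
applied to the decomposition $X_\pi = S_\pi + N_\pi$ that comes from the linearity of the permutation action on matrix entries.

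First I would fix an arbitrary $\pi \in \Pi_0$ and analyze $\|X_\pi\|$. Assumption 2 (signal destruction) gives $\|S_\pi\| \to 0$ a.s. Assumption 1 (noise invariance) gives $\|N_\pi\| =_d \|N\|$, and combined with the standing normalization $\|N\| \to b$ this yields $\|N_\pi\| \to b$ in the appropriate mode (in probability in the distributional version; a.s.\ in the parallel mode, via the \emph{mutatis mutandis} convention of the excerpt). The triangle inequality then gives $\|X_\pi\| \to b$. Because $\Pi_0$ is finite, a union bound over $\pi \in \Pi_0$ promotes this to $\max_{\pi \in \Pi_0}\|X_\pi\| \to b$.

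From here the consistency conclusion is immediate. For an above-noise factor, namely $k$ with $\sigma_k(X) > b$ a.s., we get $\sigma_k(X) > \max_{\pi \in \Pi_0}\|X_\pi\|$ a.s., so asymptotic PA selects factor $k$. Conversely, for $k$ with $\sigma_k(X) < b$, the comparison is reversed and PA does not select $k$; since the singular values are in decreasing order, once a factor fails the test all subsequent factors automatically fail as well, so the sequential stopping rule of PA recovers exactly the set of above-noise factors.

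The main obstacle, in my view, is the apparent gap between the purely distributional statement $N_\pi =_d N$ in Assumption 1 and the a.s.\ control of $\|N_\pi\|$ that the ``a.s.\ mode'' of the lemma formally demands. Equality in distribution only transfers convergence in probability, so the cleanest reading is to run the whole argument in the in-probability version; the a.s.\ parallel then either needs the corresponding a.s.\ statement to be read into Assumption 1, or an additional tail-rate hypothesis on $\|N\| \to b$ strong enough to invoke Borel--Cantelli through the distributional identity. Once that subtlety is settled, the triangle inequality step and the reduction over the finite set $\Pi_0$ are routine.
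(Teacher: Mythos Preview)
Your proposal is correct and follows essentially the same argument as the paper: decompose $X_\pi = S_\pi + N_\pi$, use the triangle inequality to reduce $\|X_\pi\|$ to $\|N_\pi\|$, invoke noise invariance plus $\|N\|\to b$ to get $\|N_\pi\|\to b$, and then pass to the max over the finite set $\Pi_0$. You are in fact slightly more careful than the paper about the a.s.\ versus in-probability issue; the paper simply writes $\|N_\pi\| =_d \|N\| \to b$ and relies on the earlier \emph{mutatis mutandis} convention to cover both modes.
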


\begin{proof}
Since $X_\pi = S_\pi+N_\pi$, by the triangle inequality we have $|\|X_\pi\| - \|N_\pi\|| \le \|S_\pi\| \to 0$. Now, by invariance $N =_d N_\pi$, and by the convergence $\|N\| \to b$ to the noise level, we have that the operator norms of the permuted matrices also converge: $\|N_\pi\| =_d \|N\| \to b$.  Hence, it follows that $\|X_\pi\| \to b$.

Asymptotic parallel analysis selects the factors for which $\sigma_k(X)>\max_{\pi \in \Pi_0}$ $ \|X_\pi\|$ a.s. Since $\Pi_0$ is of fixed size, based on the above argument, this is the same as those factors for which $\sigma_k(X)>b$ a.s.,  which are exactly the above-noise factors. This finishes the proof. 

\end{proof}

This result is a very elegant form of the statement that PA selects the number of above-noise factors. However, it deals with asymptotic PA, which is an oracle method only defined asymptotically. Can we remove the asymptotics from the definition of the method?

Recall that we consider the version of non-asymptotic parallel analysis which selects the factors for which $\sigma_k(X)>\max_{\pi \in \Pi_0} \|X_\pi\|$. 
Since above-noise factors are defined asymptotically by comparing $\sigma_k(X)$ to $b$, and non-asymptotic PA depends only on finite $n,p$, it is not clear how to show that PA selects all above-noise factors. However, this becomes clear if we focus on \emph{separated above-noise factors}, called \emph{perceptible factors}, as indicated previously. Thus, we define \emph{perceptible factors} as the $k$ for which $\sigma_k(X)>b+\ep$ a.s. for some $\ep>0$.  We also define \emph{imperceptible factors} as the $k$ for which $\sigma_k(X)<b-\ep$ a.s. for some $\ep>0$.  We then have the following analogue of the previous lemma: 

\begin{lemma}[Consistency lemma for non-asymptotic PA]
\label{non_asy_pa}
Under the conditions of Lemma \ref{cons_lem}, PA selects all perceptible factors, and no imperceptible factors, a.s.

\end{lemma}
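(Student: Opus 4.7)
The plan is to piggyback directly on the proof of Lemma \ref{cons_lem}. That proof established, for any single $\pi$ drawn from $\Pi$, that $\|X_\pi\| \to b$ a.s., using the triangle inequality bound $|\|X_\pi\|-\|N_\pi\|| \le \|S_\pi\|$, the signal destruction $\|S_\pi\|\to 0$, and the fact that $\|N_\pi\| =_d \|N\| \to b$. The only new ingredient needed for non-asymptotic PA is to convert the asymptotic comparison $\sigma_k(X) > b$ into a finite-sample comparison against the random threshold $T_{n,p} := \max_{\pi\in\Pi_0}\|X_\pi\|$ that non-asymptotic PA actually uses.

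First I would argue that $T_{n,p}\to b$ a.s. Because $\Pi_0$ is a fixed finite collection of independently drawn permutations, applying the argument from Lemma \ref{cons_lem} to each $\pi\in\Pi_0$ separately yields $\|X_\pi\|\to b$ a.s. for every such $\pi$; a union bound over this finite set then gives $T_{n,p}\to b$ a.s. Next, fix a perceptible factor $k$, so that $\sigma_k(X) > b+\ep$ a.s. for some $\ep>0$. Since $T_{n,p}\to b$ a.s., for $n,p$ sufficiently large we have $T_{n,p} < b+\ep/2 < \sigma_k(X)$, so the $k$-th factor is selected. Conversely, for an imperceptible factor, $\sigma_k(X) < b-\ep$ while $T_{n,p}>b-\ep/2$ eventually, so the $k$-th factor is rejected. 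Both conclusions hold on a set of probability one.

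I do not expect a serious obstacle here, as the whole argument is a reduction to Lemma \ref{cons_lem}. The one subtlety worth checking is that PA is sequential — it only tests the $(k+1)$-st factor if the $k$-th was selected — yet the statement concerns the set of selected factors, not the stopping rule. This is harmless because the singular values are ordered $\sigma_1(X) \ge \sigma_2(X) \ge \ldots$, so if $\sigma_k(X) > b+\ep$ then $\sigma_j(X) > b+\ep$ for all $j\le k$; hence the perceptible factors form a prefix of the singular value list, and the sequential procedure correctly selects each of them before reaching the first non-perceptible index. A minor point to be careful about is whether the noise-invariance hypothesis gives $\|N_\pi\|\to b$ a.s. or only in probability when $\pi$ is sampled independently of $N$; this matches exactly the issue already present in Lemma \ref{cons_lem}, and the analogous statement in probability holds by the same reasoning with \emph{mutatis mutandis} replacements, as foreshadowed in the text above.
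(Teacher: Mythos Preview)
Your proposal is correct and follows essentially the same approach as the paper's proof, which is even terser: it simply notes that $\max_{\pi\in\Pi_0}\|X_\pi\|\to b$ a.s.\ (inherited from the proof of Lemma~\ref{cons_lem}) and declares the conclusion immediate. Your additional care about the sequential stopping rule and the $\ep/2$ buffering are reasonable elaborations the paper leaves implicit.
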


\begin{proof}

Non-asymptotic parallel analysis selects the factors for which $\sigma_k(X)$ $>\max_{\pi \in \Pi_0} \|X_\pi\|$. Since  $\max_{\pi \in \Pi_0} \|X_\pi\| \to b$ a.s., it is immediate that this includes all perceptible factors, and no imperceptible factors, a.s.

\end{proof}

These results give broad conditions for the signal and the noise under which PA is consistent. The real work is always to show that these conditions hold in particular cases of interest, such as for factor models. 

\subsubsection{Conditions on the signal and the noise}

When do our assumptions hold? We start here with a brief discussion. 

For the noise, we  need two assumptions. 
\benum
\item  The existence of a well-defined asymptotic noise level $b>0$ such that $\|N\| \to b>0$. This imposes a restriction on the noise models. For this condition, it will be helpful that operator norms of random matrices have been studied thoroughly, and thus there are broad conditions under which such convergence is known. 
\item  The invariance of the distribution of noise to permutations: $N =_d N_\pi$. There is a tradeoff: the more general the noise distribution, the smaller the set of permutations that keeps it invariant. Thus, this also imposes a restriction, because we may need a large set of permutations to cancel out the signal terms, as we see next.
\eenum

For the signal, we need one assumption:

\benum
\item  The operator norm of the permuted signal matrices vanishes, $\|S_\pi\| \to 0$ a.s. for all $S \in \Theta$. There is a tradeoff here too: The larger the parameter space $\Theta$, the harder this is, and the more permutations are needed to get enough ``averaging'' for this to hold. For certain signals, e.g., the all ones matrix with $S_{ij}=1$, this is entirely impossible, because every permutation keeps the matrix unchanged. 


\eenum

In the next two sections, we examine the two sets of conditions in more detail. 


\section{Signal models}
\label{sig_mod}

When do our assumptions on the signal hold? We need that permutations ``destroy'' the signal structure, so that $\|S_\pi\| \to 0$ a.s., for all $S \in \Theta$. Consider a rank one signal matrix $S = uv^\top$. Then, acting on this by a permutation array $\pi$ we get (denoting by $\odot$ elementwise product of matrices):
$$S_\pi = (uv^\top)_\pi = (u1^\top)_\pi \odot  1v^\top = [\pi_1(u); \pi_2(u); \ldots; \pi_p(u)] \odot 1v^\top.$$
Each permutation $\pi_j$ permutes the corresponding column $j$ of $S$. This column equals $v_j u$, so $\pi_j$ permutes the entries of $u$. Since $\pi_j$ is a uniformly random permutation, the distribution of this column is uniform on all permutations of $u$, and is ``modulated'' by $v_j$. If the entries of $u$ sum to zero, this is effectively ``noise'' of variance approximately $v_j^2/n$. The $n$ entries of column $j$ are exchangeable random variables, which are almost independent for large $n$. Hence, $(uv^\top)_\pi $ is a random matrix whose columns are independent, and within each column the entries are nearly independent, with variance approximately $v_j^2/n$. If the entries of the matrix were independent, we could use well-known results controlling its operator norm \citep[e.g.,][]{vershynin2010introduction}. However, since the entries are dependent, we need to establish these results here from first principles. 

 A first simplification is that we can separate the component corresponding to $u \in span(1)$, where $1 = (1,1,\ldots)^\top$ is the all ones vector, and its orthocomplement. The first part is kept invariant by the permutation. So we just need to assume that it goes to zero. Let $\theta \cdot n^{-1/2} 1 \cdot v^\top$ be this component, where $|u|_2=|v|_2=1$. Then, we need to assume that $\theta \to 0$, because we need
$$
\|\pi(\theta \cdot n^{-1/2} 1 \cdot v^\top)\| 
=\|\theta\cdot n^{-1/2} 1 \cdot v^\top\| 
=\theta |v|_2 = \theta \to 0.
$$
In our application to factor models, we will separate this component, and show that $\theta \to 0$ holds. 

On the orthocomplement, we will use the moment method to show $\|S_\pi\|\to 0$.  We have that $\|A\|^k \le \tr (A^\top A)^k $ for all $k$. Hence,
$$P(\|A\|>\ep)  =  P(\|A\|^k >\ep^k) \le P( \tr(A^\top A)^k >\ep^k) \le \ep^{-k} \E\tr(A^\top A)^k$$
Thus, to show that $\|S_\pi\| \to 0$ in probability, it is enough to argue that $\E \tr (A^\top A)^k  \to 0$ for some $k>0$. To show a.s. convergence, by the Borel-Cantelli lemma we need that $\E\tr(A^\top A)^k$ is summable for some $k>0$. After the appropriate moment calculations, we obtain the following result:

\begin{theorem}[Requirements on the signal]
\label{sig_cond}
Consider signals of the form $S = n^{-1/2}\theta\cdot 1v_0^\top +T$, where $T=\sum_{i=1}^r \theta_i u_i v_i^\top$, with $|u_i|_2=|v_i|_2=1$, and $u_i^\top 1 = 0$ for all $i$.  Here $r$ can be fixed or change with the dimensions $n,p$. Suppose that $\theta \to 0$. Define the constants $A_{nk}$ for $k=2,3,4$ as 

$$A_{nk} = \sum_{i=1}^r \theta_i \cdot C_{k}(v_i)^{1/(2k)}$$

where $C_{k}(v)$ are defined as 

\benum
\item $C_{2}(v) = 1/(n-1) +|v|_4^4$
\item $C_{3}(v) = 1/(n-1)^2+9n^{-1} |v|_4^4 + |v|_6^6$
\item $C_{4}(v) = 1/(n-1)^3 +4/(n-1)^2 |v|_4^4 + 12n^{-1} [|v|_4^8 + |v|_6^6]+ |v|_8^8$

\eenum
Then, 
$$[\E\tr (S_\pi^\top S_\pi)^k]^{1/(2k)} \le A_{nk}.$$ 

Therefore, 

\benum
\item If $A_{nk}\to 0$, then $\|S_\pi\| \to 0$ in probability. 
\item If  $A_{nk}^{2k}$ are summable, then $\|S_\pi\| \to 0$ almost surely. 
\eenum

\end{theorem}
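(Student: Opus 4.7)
The plan is to follow the moment-method strategy sketched in the paragraph preceding the statement: bound the operator norm by $[\tr(S_\pi^\top S_\pi)^k]^{1/(2k)}$, control this moment, and conclude via Markov's inequality and the Borel--Cantelli lemma.

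First I would peel off the span-of-$1$ component. Since each $\pi_j$ fixes the all-ones vector, the summand $n^{-1/2}\theta\,1 v_0^\top$ is invariant under the permutation action, so it contributes exactly $\theta$ to $\|S_\pi\|$; by assumption $\theta\to 0$, so it suffices to bound $[\E\tr(T_\pi^\top T_\pi)^k]^{1/(2k)}$ with $T=\sum_i \theta_i u_i v_i^\top$ and $u_i\perp 1$. Then I would reduce to a single rank-one term. The functional $A\mapsto [\tr(A^\top A)^k]^{1/(2k)}$ is the Schatten-$2k$ norm $\|A\|_{S_{2k}}$, and $A\mapsto [\E\|A\|_{S_{2k}}^{2k}]^{1/(2k)}$ is the $L^{2k}$ norm on Banach-space-valued random variables, hence itself a norm by Minkowski. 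Since $T_\pi=\sum_i \theta_i M_i$ with $M_i=(u_i v_i^\top)_\pi$, this gives
$$[\E\tr(T_\pi^\top T_\pi)^k]^{1/(2k)} \le \sum_{i=1}^r \theta_i [\E\tr(M_i^\top M_i)^k]^{1/(2k)},$$
reducing the theorem to proving $[\E\tr(M^\top M)^k]^{1/(2k)}\le C_k(v)^{1/(2k)}$ for $M=(uv^\top)_\pi$ with $|u|_2=|v|_2=1$ and $u^\top 1=0$.

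The heart of the argument is this per-rank-one moment bound. Using $M_{aj}=v_j u_{\pi_j(a)}$, one has $(M^\top M)_{jj'}=v_j v_{j'}\sum_a u_{\pi_j(a)} u_{\pi_{j'}(a)}$ and therefore
$$\E\tr(M^\top M)^k = \sum_{j_1,\ldots,j_k} \prod_{l=1}^k v_{j_l}^2 \sum_{a_1,\ldots,a_k} \E \prod_{l=1}^k u_{\pi_{j_l}(a_l)} u_{\pi_{j_{l+1}}(a_l)},$$
with cyclic $j_{k+1}=j_1$. I would group the $k$-tuples $(j_1,\ldots,j_k)$ by their partition into equal classes: independence of the $\pi_j$ across distinct classes factorizes the expectation into single-permutation moments $\E\prod u_{\pi(b)}$, which by $\sum_i u_i=0$ vanish whenever any index $b$ appears only once; the surviving terms evaluate to explicit rational functions of $n$ and, since $|u|_2=1$, to powers $|u|_{2m}^{2m}\le 1$. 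The remaining sum over $(v_{j_l})$ is bounded using $|v|_2=1$ in terms of $|v|_4^4$, $|v|_6^6$, $|v|_8^8$ and prefactors $1/(n-1)^m$ matching the stated $C_k(v)$. For $k=2$ only the cases $j_1=j_2$ (diagonal, producing $|v|_4^4$) and $j_1\ne j_2$ (off-diagonal, producing $1/(n-1)$ from $\E u_{\pi(a)}u_{\pi(b)}=-1/[n(n-1)]$) arise; for $k=3,4$ one enumerates the set partitions of $\{1,\ldots,k\}$ and tracks which survive.

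The main obstacle is the combinatorial bookkeeping for $k=4$: there are many set partitions of the index tuple, and within each one must separately analyze the $(a_l)$ multi-index sum, identify which sub-partitions survive the $\sum_i u_i=0$ constraint, and bound the resulting $v$-sum by a specific $|v|_{2m}^{2m}$ with the correct power of $n^{-1}$. Once this is completed, the conclusions are immediate: Markov's inequality $\Pr(\|S_\pi\|>\ep)\le \ep^{-2k}\,\E\tr(S_\pi^\top S_\pi)^k$ yields convergence in probability when $A_{nk}\to 0$, and summability of $A_{nk}^{2k}$ together with Borel--Cantelli delivers almost sure convergence.
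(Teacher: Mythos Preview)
Your proposal is correct and follows essentially the same approach as the paper: peel off the $1v_0^\top$ term, reduce to rank-one summands via the Schatten-$2k$ triangle inequality together with Minkowski for the $L^{2k}$ norm, expand $\E\tr(M^\top M)^k$ and organize the sum by the coincidence pattern of the column indices $(j_1,\ldots,j_k)$, then conclude with Markov and Borel--Cantelli. The paper carries out exactly this case-by-case enumeration for $k=2,3,4$, with the only cosmetic difference that it collapses the $u$-sums via the deterministic identity $\sum_a u_{\pi(a)}^2=1$ rather than bounding $|u|_{2m}^{2m}\le 1$.
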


The proof is provided in Sec. \ref{sig_cond_pf}. Note that the second condition can only hold for $k\ge 3$, because $n^{-1}\lesssim A_{n2}^4$. 

An interesting consequence of this result is that PA works in certain cases even when the \emph{number} of signals as well as the \emph{strength} of signals grows to infinity simultaneously. Indeed, suppose $v_i$ are all maximally delocalized, so that $|v_i|_\infty \le C p^{-1/2}$ for some $C>0$. Then, we have $|v_i|_4^4 \le C^4 p^{-1}$, and $|v_i|_6^6 \le C^6 p^{-2}$, therefore $C_3(v_i) \le C'(n^{-2}+p^{-2})$ and

$$A_{n3} ^6\le C' \left[\sum_{i=1}^r \theta_i \right]^{6} \cdot \left ( \frac{1}{p^2}+\frac{1}{n^2}\right)$$

So we need to find conditions under which this goes to zero or is summable. When $n,p\to \infty$, for $A_{n3} \to 0$ it is enough that $\sum_{i=1}^r \theta_i = O(\min(n,p)^{1/3-\ep})$ for some $\ep>0$. For instance, when $p/n\to\gamma>0$, is enough that $m|\theta|_{\infty} =O(n^{1/3-\ep})$. We can take $n^{1/6}$ spikes (signals) of size $n^{1/6-\ep}$ each, and parallel analysis will work. Alternatively, we can take one spike of strength $\theta = n^{1/3-\ep}$.

This is important, because it allows us to handle two seemingly very different regimes simultaneously: the ``explosive'', i.e., growing, spikes of the type that are common in econometrics \citep{bai2008large}, while also handling the constant-sized spikes that are common in the literature in random matrix theory and applications to statistics \citep[e.g.,][]{paul2014random, yao2015large}.


\subsection{Optimality considerations}
\label{optim}
\subsubsection{Signal strength}
\label{optim_sig}
The above results and discussion show that PA selects the perceptible factors in models of the form $\theta u v^\top +N$ as long as the signal strengh $\theta$ is not too large. For instance, we saw that $\theta = \min(n,p)^{1/3-\ep}$ suffices for delocalized signals. It may seem counterintuitive that a strong signal can decrease the performance of PA. Is this a weakness of our theoretical analysis, or a weakness of the method? 

To understand this issue, we recall that PA ``transforms the signal into noise''. Thus, a large signal is transformed into large noise, which can lead to the overestimation of the true noise level. In turn, this may prevent the selection of the above-noise factors which are \emph{not} above the estimated noise level. So the problem is with PA, not with our result.

More precisely, the permuted matrix $S_\pi=(\theta u v^\top)_\pi$ is a matrix with independent columns, and within each column, with approximately independent (in truth, exchangeable) bounded entries. If the entries were independent with variance $\sigma^2/n$, the operator norm would be of order $\sigma \cdot [1+(p/n)^{1/2}]$ \citep{bai2009spectral,vershynin2010introduction}. In our case, $\tr S_\pi^\top S_\pi = \tr S^\top S = \theta^2$, so heuristically, $p \sigma^2 \approx \theta^2$. Thus, heuristically
$$\|S_\pi\| \approx  \theta \cdot [n^{-1/2}+p^{-1/2}].$$
In our consistency lemma, we showed that PA will select the above-noise factors if $\|S_\pi\| \to 0$, which amounts to $\theta \cdot [n^{-1/2}+p^{-1/2}] \to 0$. In particular, under high-dimensional asymptotics when $p/n\to \gamma>0$, this holds if $\theta/n^{1/2} \to 0$. This suggests that our result $\theta = n^{1/3-\ep}$ is \emph{not} optimal, and PA works more broadly. We note that a $k$-th moment bound in Theorem \ref{sig_cond} will allow $\theta = o(p^{1/2-1/(2k)})$. However, much more work is needed to show such a bound.  In principle, the current moment calculations should work, but this is much beyond the scope of this work, as they become too hard for large $k$. 

Thus it appears that very strong factors lead to problems with PA. This is counter-intuitive, because strong factors should be easy to detect. However, this apparent paradox can be resolved. The noise level estimated by PA is of the order of 
$$f_{est} \approx \max(b, \sum_{k=1}^r\theta_k \cdot [n^{-1/2}+p^{-1/2}]).$$
A factors is not selected if $\sigma_k(X) < f_{est}$. From the analysis of spiked covariance matrix models, when the noise is of the form $n^{-1/2}X$ for $X$ with iid entries, we expect the empirical singular values to behave (very roughly speaking) like $\sigma_k(X) \approx \theta_k+(p/n)^{1/2} $. From these two approximations, a factor $k$ is not selected only if $\theta_k/\sum_{k=1}^r\theta_k \lesssim n^{-1/2}+p^{-1/2}$. This shows that only the \emph{relatively unimportant} factors are not selected by PA, in the sense that the relative strength of the factor $k$, $\theta_k/\sum_{k=1}^r\theta_k$, must be small. In this sense, PA still selects the "relatively large" factors.

\subsubsection{Delocalization}

What is the precise condition needed on $v$?  In  Theorem \ref{sig_cond} we gave several conditions depending on the norms $|v|_k$, for $k=4,6,8$, which all amount to some form of delocalization, in the sense that $v$ is non-sparse. Some non-sparsity condition is needed. Indeed, when $v = (1,0,\ldots,0)$,  then a permutation only acts on the first column of $u v^\top$, thus the operator norm is unchanged. Some form of delocalization is needed, but finding the precise condition may need a new theoretical approach, which is beyond our current scope.  


\section{Noise models}
\label{noise_mod}
When do our assumptions on the noise  hold? We need two assumptions: invariance to permutations, and operator norm convergence. 

\subsection{Invariance}
\label{invar}

We need the that noise is invariant in distribution to permutations $N =_d N_\pi$, where the permutation $\pi$ is also random. We will study when invariance holds for any \emph{fixed} permutation $\pi$; then it will also hold for random permutations $\pi \sim \Pi_0$ chosen indepenendently from $N$.  This is a non-asymptotic condition, so the findings will apply to any asymptotic setting we consider. 

For $N =_d N_\pi$ it is enough if columns of $N$ are independent, and each column has exchangeable entries. But a weaker condition is enough. Suppose for instance that $(N_{ij})_{ij}$ are an equicorrelated Gaussian random vector, in matrix form. Then clearly they are not independent, but are still invariant under any permutation. 

Following this logic, if we vectorize the matrix $N$ into an $np$-length vector, whose blocks of size $n$ are the different features indexed from $1$ to $p$, the condition $N =_d N_\pi$ means that the distribution is invariant under permutations within the blocks. For a Gaussian random vector, in terms of the covariance matrix, this means that it has the following block structure:

\bitem
\item
$\Var{N_{ij}} = \sigma_j^2$: Within any column $j$, the entries $N_{ij}$ are exchangeable random variables. Thus, they must have the same variance $\sigma_j^2$.
\item
$\Cov{N_{ij}, N_{i'j}} = \tau_j^2$ for $i \neq i'$:  Similarly, distinct entries $N_{ij}, N_{i'j}$ in the same column must have the same covariance.
\item
$\Cov{N_{ij}, N_{kj'}} = \eta_{jj'}^2$ for $j \neq j'$: Consider two distinct columns $j$, $j'$. Since the entries within each of them can be permuted independently of each other while preserving the distribution, the covariance between any two entries $N_{ij}, N_{kj'}$ must be the same. 

\eitem

Equivalently, one has the explicit representation:

$$N = \mathcal{E} D^{1/2} + 1 z^\top \Sigma^{1/2},$$

where $ \mathcal{E} $ is $n\times p$ matrix of iid Gaussians, $D$ is diagonal, $z \sim \N(0,I_p)$, and $\Sigma$ is $p\times p$ PSD.  Here $\Sigma$ induces the correlations between the different columns, and is not necessarily diagonal.  Thus each sample has the form $N_i = D^{1/2}\ep_i+ \Sigma^{1/2}z \sim \N(0,D+\Sigma)$, which is a sum of a sample-specific independent diagonal normal random vector $D^{1/2}\ep_i$, and the same normal random vector $\Sigma^{1/2}z$ added to each sample. 

A bit more generally, we have the following representation for noise models invariant under permutations. The proof is immediate, and thus it is omitted.

\begin{prop}[Requirements for noise invariance] Suppose that the noise matrix $N$ has rows of the form $N_i = D^{1/2}\ep_i+ z$, where $\ep_i$ are iid across $i$, and $z$ is any random vector independent of all $\ep_i$. Suppose that $\ep_i$ have independent standardized entries. and $D^{1/2}$ is diagonal. Then, the distribution of $N$ is invariant under column permutations, i.e., $N =_d N_\pi$ for any fixed permutation array $\pi$.
\label{noise_iv}
\end{prop}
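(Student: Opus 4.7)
The plan is to verify the distributional identity $N =_d N_\pi$ directly by working entry-wise, exploiting the column-separable structure of the model together with the fact that within each column the idiosyncratic part consists of iid entries.

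First I would fix an arbitrary permutation array $\pi = (\pi_1, \ldots, \pi_p)$ and write out the coordinate representation of both sides. Writing $d_j$ for the $j$-th diagonal entry of $D$, $E = (\ep_{ij})$ for the $n\times p$ array stacking the rows $\ep_i$, and $z = (z_1,\ldots,z_p)$, the model gives $N_{ij} = d_j^{1/2}\ep_{ij} + z_j$, so by definition of the permutation action $(N_\pi)_{ij} = d_j^{1/2}\ep_{\pi_j(i),j} + z_j$. Thus it suffices to show that the joint law of $(\ep_{\pi_j(i),j})_{i,j}$ equals that of $(\ep_{ij})_{i,j}$, and that this identity can be coupled with the unchanged $z$ in the same way on both sides.

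Second I would exploit the two levels of independence available. By assumption the rows $\ep_i$ are iid, which means that for each fixed column $j$ the sequence $(\ep_{ij})_{i=1}^n$ is iid (with the common marginal of the $j$-th coordinate of the row distribution). Hence applying the fixed permutation $\pi_j$ to the $j$-th column of $E$ does not change its joint distribution. Moreover, since the entries within a row are independent and rows are iid, the whole array $E$ has mutually independent columns, so applying independent permutations $\pi_1,\ldots,\pi_p$ to the $p$ columns also preserves the joint distribution of $E$. This gives $E =_d E_\pi$ on the full $n\times p$ matrix.

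Third I would combine this with the additive shift. Since $\pi$ acts only within columns, the constant-per-column shift $z_j$ is invariant under $\pi$ (it is repeated across all rows of column $j$, so its image under column-permutation is itself). Because $z$ is independent of $E$, a standard application of independence-preserving transformations (or just integrating against the product measure) yields $(E,z) =_d (E_\pi, z)$, and therefore $N =_d N_\pi$ by the continuous map $(E,z) \mapsto D^{1/2}E^{\text{col}} + 1 z^\top$ applied identically to both pairs. The main ``obstacle'' is really just bookkeeping: one has to be careful that $\pi$ does not mix columns, so both the deterministic modulation by $d_j^{1/2}$ and the additive shift by $z_j$ stay attached to their original column index, which is exactly what makes the shift invariant and the per-column iid structure enough.
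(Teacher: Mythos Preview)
Your argument is correct. The paper itself omits the proof entirely, calling it ``immediate'', so your write-up simply fills in the obvious details: the column-wise action of $\pi$, the fact that all entries of $E$ are mutually independent (hence each column is an iid vector and distinct columns are independent), and the independence of $z$ from $E$ so that the additive per-column shift carries over unchanged. There is nothing to compare against beyond this.
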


This result covers factor models, where the noise is of the form $N_i = D^{1/2}\ep_i$. The term $z$ is allowed by the theory, but it is usually not of practical interest. This common term can be viewed as the mean of $N_i$. Even if the mean is zero, this can be viewed as a common perturbation affecting all samples. However, $z$ increases the overall noise level, because the operator norm $\|N\|$ is of order $(np)^{1/2}$. Thus, the presence of  $z$  renders many previously above-noise factors to sink below the noise. The practically interesting scenarios usually have $z=0$, which can be achieved by de-meaning the data. 

In addition, there are other noise models that can be reduced to the model from Prop. \ref{noise_iv}. A prime example is correlated noise models where taking the Fourier transform, or some other known orthogonal transform in space-time, leads to independent coordinates.

For instance, in time series analysis \citep[e.g.,][]{brockwell2009time}, stationary processes can be transformed into having approximately independent coordinates by the Fourier transform. By the spectral representation theorem, every zero-mean stationary process has the representation $X_t = \int_{(-\pi,\pi]} \exp(it\nu) dZ(\nu)$, where $Z$ is an orthogonal-increment process. 

The autocovariance function can be written as $\gamma(h) =$ $  \int_{(-\pi,\pi]} \exp(it\nu)$ $ dF(\nu)$, for a distribution function $F$. 
If $\gamma$ is absolutely summable and real-valued, then the process has asymptotically uncorrelated Fourier components \citep[e.g.,][Prop. 4.5.2.]{brockwell2009time}. In particular, permutation methods are heuristically reasonable. However, making this rigorous would require us to understand what happens to the permutation distribution when we have only an approximate invariance of the noise. This is beyond our scope, but is interesting for future work (see Sec. \ref{conc}).

\subsection{Operator norm convergence}
\label{op_norm_conv}

The second condition that we need for the noise is the convergence of the operator norm: $\|N\| \to b>0$. Operator norms of random matrices have been studied for a long time, see e.g., \cite{bai2009spectral,vershynin2010introduction}. We are fortunate that we can leverage some of these results. For instance, Bai, Yin, Silverstein and others have showed convergence of the operator norm of matrices of the form $N = n^{-1/2} X T^{1/2}$, where the entries of $X$ are iid standardized random variables, and where $p,n\to\infty$  such that $p/n\to\gamma>0$. We state this result together with another one for the case $p/n\to\infty$.

\begin{prop}[Requirements for noise operator norm, partly a corollary of Cor. 6.6 in \citep{bai2009spectral}] 
\label{noise_op_norm}
Suppose that the noise matrices have the form $N = c_{p}^{-1/2} X T^{1/2}$ with $c_{p} = \tr T$,  where the entries of $X$ are independent standardized random variables with bounded fourth moment, and $T$ are diagonal positive semi-definite matrices. Suppose that $p\to\infty$, and one of the following two sets of assumptions holds: 

\benum
\item $p/n \to \gamma >0$, while the distribution function of the entries of $T$ converges weakly to a limit distribution $H$, $F_{T} \Rightarrow H$.  Moreover, the operator norm of $T$ converges to the supremum of the support of $H$, $\|T\| \to \supp \sup(H)$, and the entries of $X$ have bounded 6+$\ep$-th moment.

\item The entries of $T$ are bounded as $t_j \le C \tr[T]/p$ for all $j$, while (A) $p/n \to \infty$ or (B) $n^{2+\ep} \le p$ for some $\ep>0$. 

\eenum
Then, we have $\|N\| \to b$ for some $b>0$, in probability under (2A), and almost surely under (1) or (2B).
\end{prop}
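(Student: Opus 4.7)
The plan is to reduce the problem to controlling the largest eigenvalue of $NN^\top = c_p^{-1} XTX^\top$ and then handle the two regimes separately: case (1) is essentially a citation to Bai--Silverstein's edge theorem for sample covariance matrices, while case (2) reduces to showing that $c_p^{-1} XTX^\top$ concentrates around $I_n$, exploiting the fact that $T$ has a flat diagonal and $p$ is much larger than $n$.

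For case (1), set $M_0 := n^{-1} XTX^\top$. The assumptions $p/n \to \gamma > 0$, $F_T \Rightarrow H$, $\|T\|_\op \to s(H)$, and bounded $(6+\ep)$-th moment of the entries of $X$ are exactly the hypotheses of Corollary 6.6 in Bai--Silverstein, which yields $\lambda_{\max}(M_0) \to \lambda^*(\gamma, H) > 0$ almost surely, where $\lambda^*$ is the right edge of the limiting Marchenko--Pastur-type spectral distribution determined by $\gamma$ and $H$. Because the $t_j$ are uniformly bounded in the limit (by $\|T\|_\op \to s(H) < \infty$), the weak convergence $F_T \Rightarrow H$ upgrades to $c_p/p = \tr(T)/p \to m_H := \int t\, dH(t) \in (0, s(H)]$; hence $n/c_p \to 1/(\gamma m_H)$. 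Writing $NN^\top = (n/c_p)\, M_0$ yields $\|N\|^2 \to b^2 := \lambda^* / (\gamma m_H) > 0$ a.s.

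For case (2), decompose $M := c_p^{-1} XTX^\top = c_p^{-1} \sum_{j=1}^p t_j X_j X_j^\top$ as a sum of $p$ independent rank-one PSD matrices with $\E M = I_n$, and aim to show $\|M - I_n\|_\op \to 0$; then $\|N\|^2 \to 1 =: b^2$. Using $t_j \le C c_p/p$ and bounded fourth moment of the $X_{ij}$, the per-entry variance is $\V(M_{ik}) = O(1/p)$ uniformly in $i,k$, which gives $\E\|M - I_n\|_F^2 = O(n^2/p)$. Under (2B), $p \ge n^{2+\ep}$ so $n^2/p \le n^{-\ep}$; iterating the variance calculation to higher-order $\E\tr((M - I_n)^{2k})$ via Wick-type pairings (permitted by column independence of the $X_j$ and, for $k$ small, by the fourth-moment hypothesis) produces summable tail bounds, so Borel--Cantelli gives a.s.\ convergence. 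Under (2A), where only $p/n \to \infty$ is available, the Frobenius bound is too crude; one instead computes the matrix variance $\|\E (M - I_n)^2\|_\op = c_p^{-2} \|\sum_j t_j^2\, \E(X_j X_j^\top - I_n)^2\|_\op = O(n/p)$, using $\E(X_j X_j^\top - I_n)^2 \preceq (n + \mu_4) I_n$, and applies a matrix concentration inequality (matrix Bernstein / noncommutative Khintchine) to the truncated summands $t_j(X_j X_j^\top - I_n)\mathbf{1}\{\|X_j\|_\infty \le n^{\alpha}\}$; controlling the truncation error separately via Markov and the fourth moment yields $\E\|M - I_n\|_\op = o(1)$, hence convergence in probability.

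The main obstacle is case (2A): a Frobenius-norm bound is inadequate, and standard matrix-Bernstein inequalities require sub-exponential summands, which the bounded-fourth-moment hypothesis does not supply. Truncating $X_{ij}$ at a scale compatible with the matrix concentration tools, and separately bounding the contribution of the truncated mass through the fourth-moment assumption, is the technically most delicate step. By contrast, case (1) is a clean appeal to established random-matrix theory, and case (2B) is an effective moment method made easy by the extra headroom $p \ge n^{2+\ep}$ that renders entrywise variance bounds summable.
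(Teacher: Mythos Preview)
Your case (1) matches the paper exactly: both cite Bai--Silverstein's edge theorem (Cor.~6.6) after the obvious rescaling.

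For case (2) the paper takes a much shorter route than you do. It simply computes $A:=\E\tr(M-I_n)^2=\sum_j\sigma_j^2\bigl[\E|x_j|^4-n\bigr]$ and asserts $A=O(n/p)$; then (2A) follows from Markov and (2B) from Borel--Cantelli, with no matrix concentration, no truncation, and no higher trace moments. Your route is more elaborate: you compute $\E\|M-I_n\|_F^2=O(n^2/p)$, declare this insufficient for (2A), and reach for matrix Bernstein with truncation; for (2B) you appeal to higher-order $\E\tr(M-I_n)^{2k}$.

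The discrepancy is not stylistic---your $O(n^2/p)$ is the correct rate. Expanding $\E|x_j|^4=\sum_i\E x_{ij}^4+n(n-1)$ shows that the paper's displayed line $A=p^{-2}\sum_{i,j}(\E x_{ij}^4-1)\le Cn/p$ drops the dominant cross term $n(n-1)\sum_j\sigma_j^2$; the true bound is $A\asymp n^2/p$. Hence the paper's one-line argument does not actually establish (2A) as stated (take $p=n^{3/2}$), and your matrix-variance calculation $\|\E(M-I_n)^2\|_\op=O(n/p)$ together with a concentration inequality for truncated summands is what is genuinely needed there. Likewise under (2B) the correct second-moment bound gives only $O(n^{-\ep})$, which is not summable for small $\ep$, so your higher-moment iteration (or a direct concentration argument) is again the honest route. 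In short, your proposal is more work than the paper's proof, but that extra machinery is precisely what closes the gap in the paper's computation.
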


The second statement allows $n$ fixed while $p\to\infty$, which is the ``transpose'' of classical asymptotics where $p$ is fixed and $p\to \infty$.  The proof is provided later in Sec. \ref{noise_op_norm_pf}. Combined with the conditions on noise invariance, and with the conditions on the signal, this result provides a broad set of concrete scenarios when PA selects the perceptible factors. 

\section{PCA and spiked models}
\label{spiked_mod}

Should we select the number of components in PCA using PA? As \cite{jolliffe2002principal} clearly explains, there is a substantial difference between PCA and FA, and "\emph{it is usually the case that the number of components needed to achieve the objectives of PCA is greater than the number of factors in a FA of the same data}". 

However, we can understand the behavior of PA in PCA within a certain class of popular \emph{spiked models}. Spiked models have served as a theoretical tool to understand PCA in high dimensions. There are several versions, some of them mutually exclusive, see for instance \cite{johnstone2001distribution, paul2007asymptotics, nadler2008finite, bai2012estimation, benaych2012singular, onatski2013asymptotic, nadakuditi2014optshrink}, and \cite{paul2014random, yao2015large} for more references.

An important class of signal-plus-noise spiked models was studied in \cite{benaych2012singular}. Here $X = S+N$, where $S = \sum_{i=1}^r \theta_i u_i v_i^\top$, and $n^{1/2} u_i, p^{1/2}v_i$ are each iid vectors with iid entries from a distribution that satisfies a log-Sobolev inequality. It is assumed that $n,p\to\infty$ such that $p/n\to\gamma>0$,  the spectral distribution of $N$ converges to a compactly supported distribution, and the top and bottom singular values converge to the respective edges $a<b$ of the distribution. The rank $r$ and the spike strengths $\theta_i$ are fixed constants. Under these conditions, \cite{benaych2012singular} derive the asymptotic limits of the empirical singular values of $X$. They establish the \emph{BBP phase transition} phenomenon discovered earlier by \cite{baik2005phase} in a special case. For $\theta_i$ above a critical value, the corresponding empirical spike $\sigma_i(X)$ will converge to a definite value larger than $b$. In this case $\theta_i$ is said to be \emph{above the phase transition}. These correspond to the perceptible factors in our terminology.  For $\theta_i$ below the critical value, $\sigma_i(X) \to b$.

Our assumptions are neither more general, nor more specific. Indeed, we allow $p/n\to\infty$ and diverging spikes, while they allow a general converging spectral distribution, without requiring permutation-invariance. 

However, our assumptions do have a nontrivial intersection. 
We can state the conclusion as a corollary. This justifies the use of permutation methods in PCA:

\begin{corollary}(PA in spiked models)
\label{PA_spiked}
Suppose we observe a signal-plus-noise spiked model $X=S+N$, where $S = \sum_{k=1}^r \theta_k u_k v_k^\top$, and $n^{1/2} u_k, p^{1/2}v_k$ are each iid vectors with iid entries from a distribution that satisfies a log-Sobolev inequality. Suppose that $n,p\to\infty$ such that $p/n\to\gamma>0$. Suppose that the noise matrix is of the form $N = n^{-1/2}YT^{1/2}$, where the entries of $Y$ are independent standardized random variables with bounded 6+$\ep$-th moments, and $T$ are diagonal positive semi-definite matrices such the distribution function of the entries of $T$ converges weakly to a limit distribution $H$. Suppose that the operator norm of $T$ converges to the supremum of the support of $H$, $\|T\| \to \supp \sup(H)$. 

According to  \cite{benaych2012singular}, Thm. 2.9, for $\theta_k$ \emph{above the phase transition}, when $\theta_k>\bar \theta$ for a certain $\bar\theta$,  the empirical singular values $\sigma_k(X)$ converge, $\sigma_k(X)\to \lambda_k$ a.s., for some $\lambda_k>b$, where $b>0$ is the limit $\|N\| \to b$, as guaranteed by Prop. \ref{noise_op_norm}. 

Then, \emph{parallel analysis selects all spikes above the phase transition}. 
\end{corollary}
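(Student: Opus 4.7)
The plan is to verify the two hypotheses of the Consistency Lemma for non-asymptotic PA (Lemma \ref{non_asy_pa})---noise invariance and signal destruction---together with the operator-norm convergence $\|N\| \to b$ that defines perceptibility, and then check that every above-phase-transition spike is perceptible in our sense. The conclusion then follows by direct application of Lemma \ref{non_asy_pa}.

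For the noise, $N = n^{-1/2} Y T^{1/2}$ has fully independent entries $N_{ij} = n^{-1/2} Y_{ij} t_j^{1/2}$, iid within each column. This fits Proposition \ref{noise_iv} exactly (take $D = n^{-1} T$, $\ep_i$ the $i$-th row of $Y$, and $z=0$), so $N =_d N_\pi$ for every fixed column permutation $\pi$. For operator-norm convergence I would rescale: writing $n^{-1/2} = (\tr T / n)^{1/2} \cdot (\tr T)^{-1/2}$ and invoking Proposition \ref{noise_op_norm}(1) on $(\tr T)^{-1/2} Y T^{1/2}$ gives almost-sure convergence of its operator norm, while the prefactor converges to $(\gamma \int x \, dH(x))^{1/2}$, using that the weak convergence of the empirical distribution of the diagonal entries of $T$ together with $\|T\| \to s(H) < \infty$ upgrades to moment convergence by uniform boundedness. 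Hence $\|N\| \to b>0$ a.s.

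For the signal, the plan is to apply Theorem \ref{sig_cond}. First split off the all-ones direction from each $u_k$: write $u_k = a_k n^{-1/2} \cdot 1 + w_k$ with $w_k^\top 1 = 0$ and $a_k = n^{-1/2} u_k^\top 1 = n^{-1} \sum_i (n^{1/2} u_k)_i$. By the strong law of large numbers, $a_k \to 0$ and $\|w_k\|_2 \to 1$ a.s., so $S$ decomposes as $n^{-1/2} \cdot 1 \cdot (\sum_k \theta_k a_k v_k)^\top + T'$, where $T' = \sum_k \theta_k \|w_k\|_2 \tilde u_k v_k^\top$ with $\tilde u_k = w_k/\|w_k\|_2$ orthogonal to $1$. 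The first piece is invariant under any column permutation and has operator norm at most $\sum_k |\theta_k a_k| \to 0$ a.s. For $T'$, I would apply Theorem \ref{sig_cond} at moment order $3$: since $p^{1/2} v_k$ has iid entries from a log-Sobolev (hence sub-Gaussian) distribution, standard concentration of the $\ell_q$-norms gives $|v_k|_4^4 = O(p^{-1})$ and $|v_k|_6^6 = O(p^{-2})$ almost surely, so $C_3(v_k) = O(n^{-2})$ once $p/n \to \gamma > 0$. Because $r$ is fixed and $\theta_k \|w_k\|_2$ is bounded, $A_{n3}^6 = O(n^{-2})$ almost surely, which is summable, yielding $\|T'_\pi\| \to 0$ a.s.\ via the a.s.\ conclusion of Theorem \ref{sig_cond} applied conditionally on $\{u_k, v_k\}_k$. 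Combining the two pieces, $\|S_\pi\| \to 0$ almost surely.

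Lemma \ref{non_asy_pa} now applies, and it remains only to verify perceptibility. Theorem 2.9 of \cite{benaych2012singular} gives $\sigma_k(X) \to \lambda_k$ a.s.\ for each above-transition spike, with $\lambda_k > b$---the same $b$, since under the present hypotheses the limiting operator norm of $N$ coincides with the upper edge of its limit spectral distribution. Choosing $\ep \in (0, \lambda_k - b)$ makes the $k$-th factor perceptible in our sense, and Lemma \ref{non_asy_pa} then delivers the corollary. The main technical obstacle is the signal-destruction step: one must argue that the random $\ell_q$-norms of the $v_k$ concentrate uniformly fast enough that the deterministic-signal bounds of Theorem \ref{sig_cond} can be invoked conditionally on the outer randomness in $(u_k, v_k)$. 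The log-Sobolev assumption supplies the required tail control essentially for free, so this obstacle reduces largely to careful bookkeeping.
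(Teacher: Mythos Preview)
Your proposal is correct and follows precisely the approach the paper intends: the corollary is stated without an explicit proof because it is meant to follow directly from the Consistency Lemma framework (Lemma~\ref{non_asy_pa}) once one checks noise invariance via Proposition~\ref{noise_iv}, noise operator-norm convergence via Proposition~\ref{noise_op_norm}(1), signal destruction via Theorem~\ref{sig_cond}, and perceptibility via \cite{benaych2012singular}; this is exactly the verification you carry out, with the same mean/orthocomplement split of $u_k$ that the paper uses in the proof of Theorem~\ref{fa_cons}. The only point you add beyond the paper is the (necessary) conditioning argument to pass from the deterministic-signal bounds of Theorem~\ref{sig_cond} to the random $u_k,v_k$ here, which you handle correctly via the a.s.\ $\ell_q$-norm asymptotics coming from the log-Sobolev assumption.
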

The analysis for the spikes below the transition is more delicate, and our results do not address it. 

We also emphasize that, the threshold $\bar \theta$ above which the factors are selected becomes much more explicit. In particular, when the covariance of the noise is identity, $\bar \theta =\sqrt{\gamma}$, which is completely explicit as a function of $n$ and $p$.

\section{Numerical simulations}
\label{nume}
We perform numerical simulations to understand  the behavior of PA. We wish to understand the effect of key parameters of the factor model, including signal strength and delocalization of loadings, on the accuracy of PA.

\begin{figure}
\centering
\begin{minipage}{.5\textwidth}
  \centering
  \includegraphics[scale=0.5]{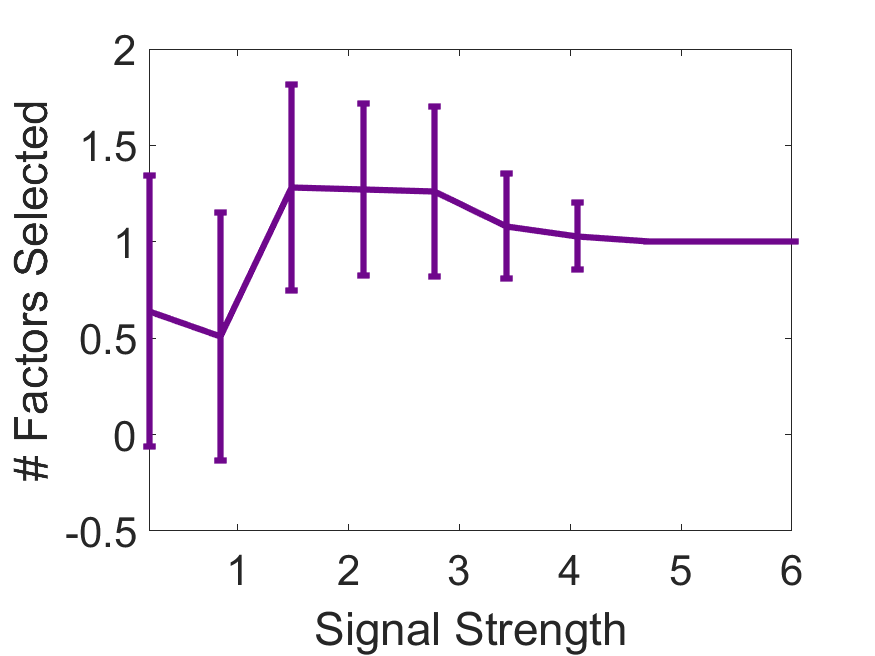}
\end{minipage}%
\begin{minipage}{.5\textwidth}
  \centering
 \includegraphics[scale=0.5]{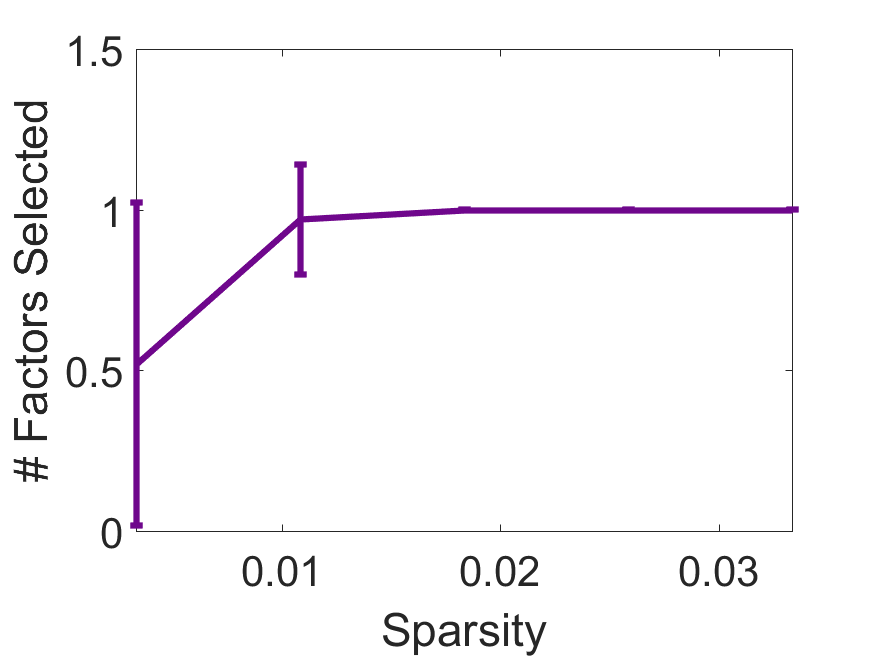}
\end{minipage}
\caption{Mean and SD of number of factors selected by PA as a function of signal strength (left), and sparsity (right).}
\label{fig:PA}
\end{figure}

\subsection{Effect of signal strength}
We simulate from the factor model $x_i = \Lambda \eta_i +\ep_i$. We generate
the noise $\ep_i \sim \N(0,I_p)$, and
the factor loadings as $\Lambda = \theta \tilde Z$, 
where $\theta>0$ is a scalar corresponding to factor strength, and $\tilde Z$ is generated by normalizing
the columns of a random matrix
$Z \sim \N(0, I_{p\times m})$.

We use a one-factor model, so $m=1$, and work with sample size $n=500$ and dimension $p=300$. It is well known that the critical regime for the signal strength $\theta$ is of the order of $\gamma^{1/2}$. We vary $\theta$ on a grid of the form $\gamma^{1/2}\cdot s$, for $s$ on a linear grid between 0.2 and 6. 

We use PA to select the number of factors. We perform 10 Monte Carlo iterations for each parameter. Motivated by our theoretical understanding, for each Monte Carlo realization of $X$, we generate only one permutation $X_\pi$. We select the first factor if $\|X\| > \|X_\pi\|$. The results in Fig. \ref{fig:PA} (a) show that PA is selects the right number of factors as soon as the signal strength $s$ is larger than $\sim 4$. This agrees with our theoretical predictions, since it shows that PA selects the perceptible factors.  

It may seem ``wrong'' that PA selects a factor even when the signal strength is nearly 0. However, this result is in agreement with our theoretical predictions. Indeed, such a factor is below-noise, but \emph{non-separated}. In line with the discussion in Sec. \ref{spiked_mod}, the singular value $\sigma_k$ corresponding to a spike below the phase transition converges to the noise level $b$. Thus, the empirical singular value does not separate from the noise level, hence PA cannot identify it as below-noise.

\subsection{Effect of delocalization}

We provide numerical evidence for our claim that ``PA works when the factors load on more than just a few variables.''
We use the same model as above. To change the delocalization of the factor scores, we define the sparsity parameter $c$, and generate $c$-sparse factors, by setting the first $\lfloor c\cdot p \rfloor$ coordinates of $Z$ to be iid Gaussians, and the remaining coordinates to be zero. One can verify that for every vector $\lambda$ of factor scores, the expected ``localization'' parameter $L=|\lambda|_4/|\lambda|_2$ is approximately $L = (9/cp)^{1/4}$, which decreases with $c$. Our theoretical results suggest that PA should select the right number of factors for ``delocalized'' or ``non-sparse'' vectors, when $c$ is large and $L$ is small.

We set $\theta = 2$ to place ourselves in a critical regime where the effect of delocalization is visible. This choice was made empirically. We vary $c$ on a grid from $1/p$ to $10/p$. We perform 100 Monte Carlo iterations for each setting of the  parameters.

The results in Fig. \ref{fig:PA} (b) show that PA tends to select the right number of factors for non-sparse, delocalized factor loadings (large $c$). This agrees with our theoretical predictions. 

It is remarkable that PA already works when the sparsity is $2\%$ ($c=0.02$). That is, if the factor loads on \emph{at least 6 out of 300 variables}, PA selects the right number of factors! This surprising result suggests that PA is likely to perform well in many realistic settings, and that delocalization is not a stringent requirement.

\subsection{Effect of dimension}

\begin{figure}
\centering
\begin{minipage}{.33\textwidth}
  \centering
  \includegraphics[scale=0.35]{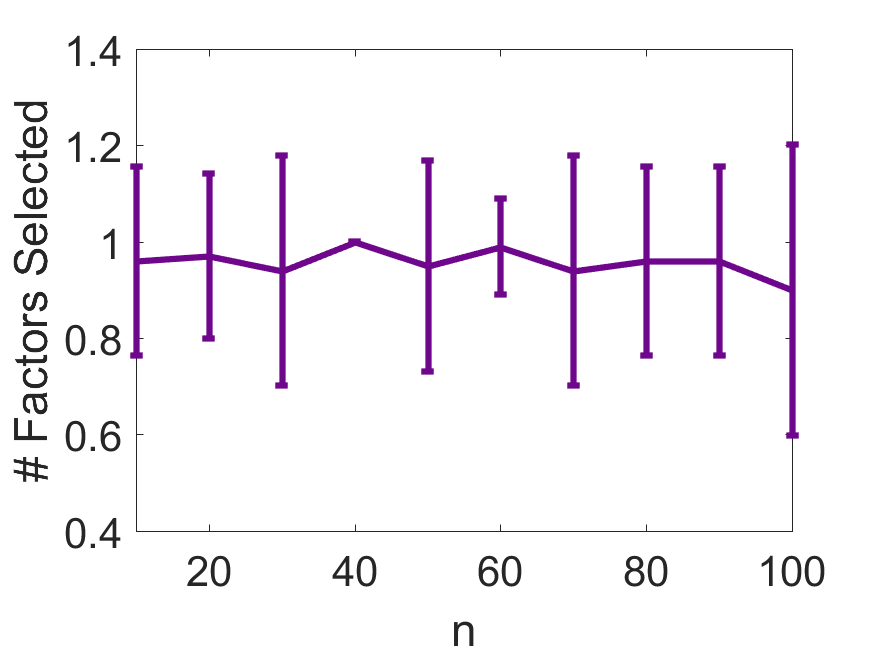}
\end{minipage}%
\begin{minipage}{.33\textwidth}
  \centering
 \includegraphics[scale=0.35]{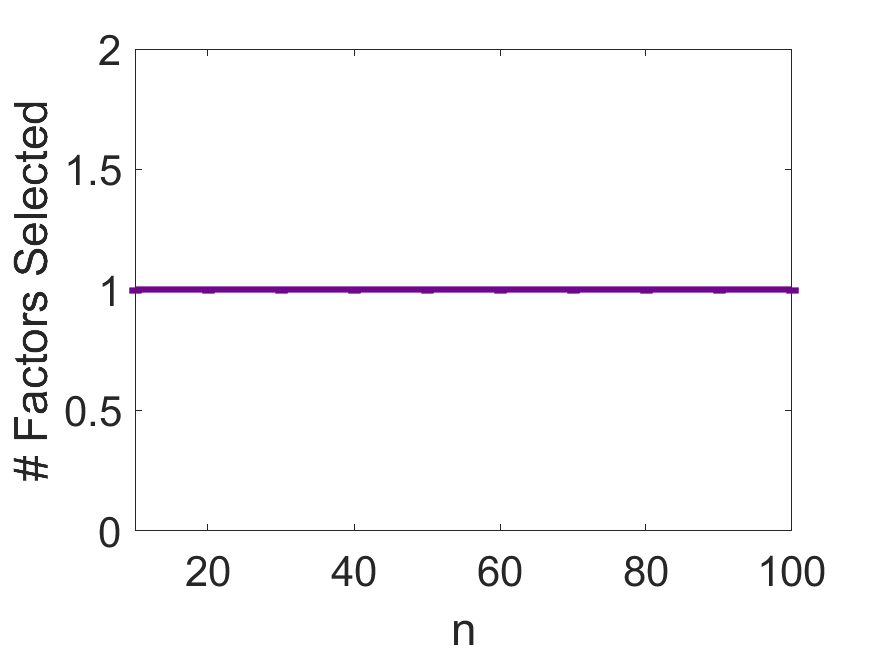}
\end{minipage}
\begin{minipage}{.33\textwidth}
  \centering
 \includegraphics[scale=0.35]{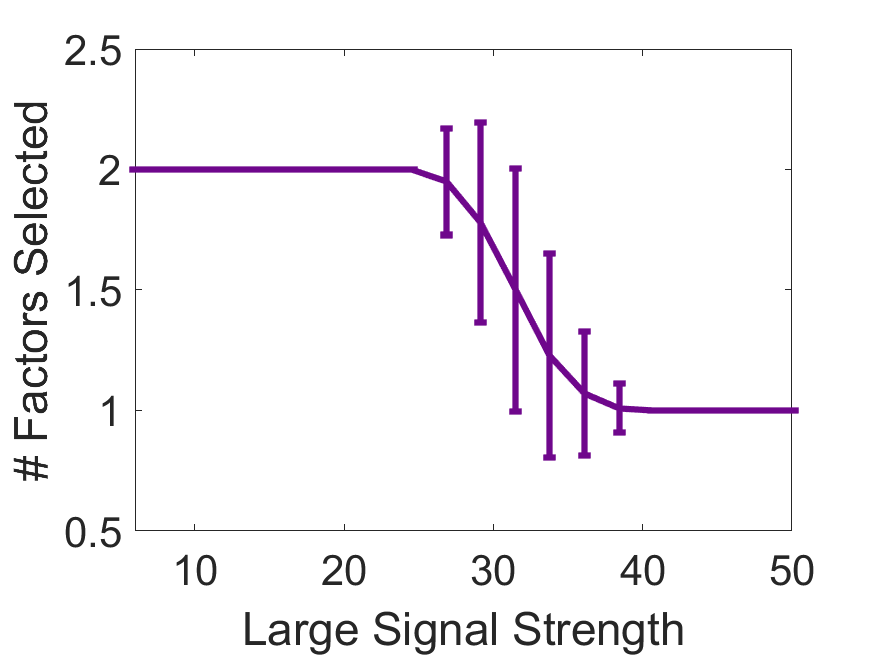}
\end{minipage}
\caption{Mean and SD of number of factors selected by PA as a function of sample size for $p=3$ (left) and $p=1000$ (middle). Same quantity in a 2-factor model as a function of stronger factor value (right).}
\label{fig:PA_dim}
\end{figure}

We provide numerical evidence for our claim that ``PA works when the dimension of the data is large, even when the di.'' Using the same model as in the first simulation, we compare the accuracy of PA for $p=3$ and $p=1000$. We set the signal strength to $\theta=6 \gamma^{1/2}$, which is a perceptible factor. This corresponds to the same signal strength for all $p$. Thus, the two problems are equally hard statistically; or put it differently, the SNR is the same for the two values of $p$.  We vary the sample size from $n=10$ to $n=100$ in steps of 10. 

The results in Fig. \ref{fig:PA_dim} show that PA tends to select the right number of factors  almost without error for $p=1000$, but not for $p=3$. This holds already for $p=10$ (data not shown).  This agrees with our theoretical predictions. Moreover, this also suggests that the requirement on the sample size is not stringent.

\subsection{Effect of strong signals on detectability of weak signals: Shadowing}

We provide numerical evidence for the claim that ``PA selects the relatively important factors.'' Using the same model as in the first simulation, we evaluate the accuracy of PA in a two-factor model. We set the smaller signal strength to $\theta_1=6 \gamma^{1/2}$, which is a perceptible factor. We vary the larger signal strength as $\theta_2=c_2 \gamma^{1/2}$ on a grid between $c_2=6$ and 50.

The results in Fig. \ref{fig:PA_dim} (c) show that PA tends to select the right number of factors  almost without error for $c_2<25$, but it starts making errors above that value. Above $c_2>35$, PA consistently selects only one perceptible factor. Qualitatively, these agree with our theoretical predictions. A strong factor is transformed into noise by PA, thus ``shadowing'' the weaker factor. Quantitatively, in this example the ratio of small-to-large signal strength where PA breaks down is $\theta_1/(\theta_1+\theta_2)  \approx 6/35 \approx 0.17$. According to our theory, this should be on the order of $n^{-1/2}+p^{-1/2} = 0.1$. Thus, our predictions seem quite accurate.

\section{Discussion}
\label{conc}
In this paper we provided a theoretical analysis of parallel analysis (PA). We established precise conditions under which PA consistently selects the perceptible factors for large datasets. We argued that PA works when the dimension of the data is large, and when the factors load on more than just a few variables. 

There are numerous important directions for future research. First, there are variants of PA  developed in applied research \citep[see e.g.,][]{peres2005many, brown2014confirmatory, crawford2010evaluation, gaskin2014exploratory}. When are they useful?  These methods differ in:

\benum

\item The test statistic, for instance: singular value gap, fraction of variance explained, robust correlations, loadings \citep{buja:eyob:1992}.

\item The number of permutations, and percentile used: mean of eigenvalues \citep{horn:1965},  other percentiles \citep{buja:eyob:1992,glor:1995}. 

\item Using stepwise testing \citep{horn:1965}. 

\item Using the correlation matrix.
\eenum

Can we understand when they help, and possibly develop improvements? This is especially interesting for tasks \emph{other} than selecting the number of factors, such as estimating the factor loadings. 

Second, what should one do when the noise is  correlated? Independent permutations on the original columns do not generate the correct null distribution. In Sec. \ref{invar} we saw that taking the Fourier transform may help for stationary time series. However, this will need a much more careful analysis. 

Third, an important issue that we have not discussed is the computational cost of PA. The cost of permutations and SVDs for PA can become a problem for ``big data''. Another important issue is the randomness introduced by PA, which can lead to arbitrary decisions. Can we speed up PA, or remove the randomness? \cite{zhou2017eigenvalue} developed such a method, by employing \cite{dobriban2015efficient}'s Spectrode algorithm to approximate the noise level. Can we develop a theoretical understanding of this method, with suitable improvements?


%
%

\section{Proofs}
\subsection{Proof of Thm. \ref{fa_cons}}
\label{fa_cons_pf}
We will check that the conditions of the Consistency Lemma \ref{non_asy_pa} hold with probability tending to one. In matrix form, the factor model reads $X = U \Psi^{1/2}\Lambda^\top +Z\Phi^{1/2}$. We first normalize it to have operator norm of unit order: $n^{-1/2}X = n^{-1/2}U \Psi^{1/2}\Lambda^\top +n^{-1/2}Z\Phi^{1/2}$. 

Let us verify the required conditions: 
\benum

\item Signal: Let $\Lambda\Psi^{1/2} = [f_1,\ldots,f_r]$. 

The signal component is 
$S=n^{-1/2} U \Psi^{1/2}\Lambda^\top  = n^{-1/2} \sum_{k=1}^r u_k f_k^\top$

Since there are only a fixed number of factors, it is enough to analyze one term $n^{-1/2} u f^\top = n^{-1/2} |u|_2 |f|_2 \cdot  \tilde u \tilde f^\top$, where $\tilde \cdot$ denotes normalized vectors. Let $\text{e} = n^{-1/2}1$. 

\benum

\item Mean term: The term in the span of $1$ is $ n^{-1/2} |u|_2 |f|_2 \cdot  \tilde u^\top \text{e}\cdot \text{e} \tilde f^\top$.
We need that $n^{-1/2} |u|_2 |f|_2 \cdot |\tilde u^\top \text{e}| \to 0$.

Now, $|f|_2 \le C n^{1/4-\delta/2}$ by assumption. Moreover, if $u_i$ has iid entries with mean 0 and variance 1, then by the LLN $n^{-1/2} |u|_2 \to 1$. Thus it is enough that $n^{1/4-\delta/2}|\tilde u^\top \text{e}|  = |n^{-1/4-\delta/2}(\sum u_i)| \to 0$. This holds by the CLT.

\item Zero-mean term: We need to analyze the term in the orthocomplement of $1$. Let $P = I - \text{e}\text{e}^\top$ be the de-meaning projection operator. Our term is $n^{-1/2} Pu f^\top = n^{-1/2} |f|_2 |Pu|_2 \cdot \widetilde{Pu} \tilde f^\top$

From Thm \ref{sig_cond}, where the low rank part has form $\theta u v^\top$, we need a bound on $\theta (2n^{-1} +|v|_4^4)^{1/4}$. But note that 
$$\theta/[C n^{1/4-\delta/2}] =n^{-1/2} |f|_2/[C n^{1/4-\delta/2}] |Pu|_2 \le n^{-1/2} |u|_2 \to 1$$
 a.s., so we need only 
$$n^{1/4-\delta/2} (2n^{-1} +|v|_4^4)^{1/4} \to 0.$$
 i.e., $n^{1/4-\delta/2}|\tilde f|_4 \to 0$, or also $n^{1/4-\delta/2}|f|_4/|f|_2 \to 0$.

\eenum

\item Noise: We need first that $n^{-1/2}Z\Phi^{1/2}$ has a distribution that is invariant under permutations, as discussed in Sec. \ref{invar}. This holds by inspection.

We need second that  $\|n^{-1/2}Z\Phi^{1/2}\|_2 \to b$. Conditions for this are given in Prop. \ref{noise_op_norm}, and one can verify that the conditions given in the theorem match these. Thus, PA selects all perceptible factors, and no imperceptible factors.

\eenum

\subsection{Proof of Thm. \ref{sig_cond}}
\label{sig_cond_pf}

We need to show that $\|S_\pi\|\to 0$, where $S = n^{-1/2}\theta\cdot 1v^\top+\sum_{i=1}^r \theta_i u_i v_i^\top$. The term $n^{-1/2}\theta\cdot 1v^\top$ is handled by the assumption $\theta\to 0$, so we can focus on the rest, and assume $\theta=0$ from now on. Note that $[\tr (A^\top A)^{k}]^{1/(2k)} = \|A\|_{2k}$ is the Schatten $2k$-norm of $A$. By the triangle inequality for the Schatten norm,  
$\|S_\pi\|_{2k} \le \sum_{i=1}^r \theta_i |(u_i v_i^\top)_\pi|_{2k}.$
Hence, $$\|S_\pi\|_{2k}^{2k} \le \left[\sum_{i=1}^r \theta_i \|(u_i v_i^\top)_\pi\|_{2k}\right]^{2k},$$ therefore
$$[\E\tr (S_\pi^\top S_\pi)^{2k}]^{1/(2k)} \le \left[\E (\sum_{i=1}^r \theta_i D_i^{1/(2k)})^{2k}\right]^{1/(2k)},$$
where $D_i = \E\tr [(u_i v_i^\top)_\pi^\top (u_i v_i^\top)_\pi]^{2k}$. Next, by the triangle inequality for the $\ell_{2k}$ norm $X \to [\E \|X\|^{2k}]^{1/(2k)}$, 
$$[\E (\sum_{i=1}^r \theta_i D_i^{1/(2k)})^{2k}]^{1/(2k)} \le \sum_{i=1}^r \theta_i [\E  D_i]^{1/(2k)}.$$
 Let us focus on bounding one such term $\E D_i$, and denote $u=u_i$, $v=v_i$ for simplicity. Let us write $A=(u v^\top)_\pi$. 

The simplest calculation is the first moment bound $\|A\|^2 \le \tr (A^\top A) $. However, this is not effective, as  $\tr (A^\top A) = \sum_{ij} [\pi(uv^\top)_{ij}]^2 =\sum_{ij} [(uv^\top)_{ij}]^2$ $ = \|uv^\top\|^2 = |u|_2^2|v|_2^2 = 1$, because the permutation of the entries does not change the sum of squares.

\subsubsection{Second moment}

We turn to the second simplest calculation, the second moment bound. This starts with the identity
$$\tr (A^\top A)^2 =\tr A^\top AA^\top A = \sum_{ijkl} A_{ij} A^\top_{jk} A_{kl} A^\top_{li} =\sum_{ijkl} A_{ij} A_{il} A_{kj} A_{kl}.$$
We have $A_{ab} = u_{\pi_b(a)} v_b$. The vectors $u,v$ are fixed, while $\pi_b$ are random and independent across $b$. Thus, if $j \neq l$, then $A_{\cdot j}$ and $A_{\cdot l}$ are independent.  Moreover, the joint distribution of  $(A_{ij},A_{kj})$, for $i \neq k$, is equal to that of $v_j \cdot (u_{\tau_1},u_{\tau_2})$, where $\tau$ is a permutation chosen uniformly at random. Thus, $$(u_{\tau_1},u_{\tau_2}) \sim Unif\{(u_i,u_j): i\neq j \}.$$
With this observation, we can make the following moment calculations:
\benum
\item

$\E A_{ij} = v_j \cdot\E u_{\tau_1} = v_j \cdot\sum_i u_i/n = 0$. 
\item

$\E A_{ij}^2 = v_j^2 \cdot\E u^2_{\tau_1} = v_j^2 \cdot\sum_i u_i^2/n = v_j^2 /n $. 
\item

$\E A_{ij} A_{kj} = v_j^2 \cdot\E u_{\tau_1}u_{\tau_2} = v_j^2 \cdot\sum_{i\neq j} u_i u_j /[n(n-1)] = v_j^2 \cdot [(\sum u_i)^2-1] /[n(n-1)]  = - v_j^2 /[n(n-1)] $. 

\eenum
Therefore, we conclude that
\begin{align*}
 \sum_{ijkl} \E A_{ij} A_{kj} A_{il} A_{kl}
&= \sum_{ik, j\neq l} \E A_{ij} A_{kj} \cdot \E A_{il} A_{kl} + \sum_{ik,j} \E (A_{ij} A_{kj})^2 \\
&= n(n-1) \sum_{j\neq l} \E A_{1j} A_{2j} \cdot \E A_{1l} A_{2l}
+n \sum_{j\neq l} \E A_{1j}^2\cdot \E A_{1l}^2\\
&  +\sum_{j} (\E \sum_{i}A_{ij}^2)^2 \\
 &= n(n-1) \cdot I+n \cdot II+ III.
\end{align*}

Then, we have the following bounds for $I,II$, and $III$: 
\benum
\item
\begin{align*}
I  &=\sum_{j\neq l} \E A_{1j} A_{2j} \cdot \E A_{1l} A_{2l} 
=\sum_{j\neq l} - v_j^2/[n(n-1)]  \cdot ( - v_l^2/[n(n-1)])   \\
&=  1/[n(n-1)]^2 \sum_{j\neq l}v_j^2 v_l^2 = (1-\sum_j v_j^4) /[n(n-1)]^2 \le 1/[n(n-1)]^2
\end{align*}

Note that $I \ge 0$, so $|I| \le 1/[n(n-1)]^2$
\item
$II  = \sum_{j\neq l} \E A_{1j}^2\cdot \E A_{1l}^2 = 1/n^2 \sum_{j\neq l} v_j^2 v_l^2 =  (1-\sum_j v_j^4) /n^2 \le 1/n^2$
\item

$III = \sum_{j} (\E \sum_{i}A_{ij}^2)^2  =  \sum_{j} (n \E A_{1j}^2)^2 = \sum_j v_j^4$

Above we used $\E \sum_{i}A_{ij}^2 = n \E A_{1j}^2  = v_j^2$. 
\eenum

Combining the bounds for $I,II$, and $III$, we get the upper bound
$$
\E\tr (A^\top A)^2 \le 1/[n(n-1)]+1/n+\sum_j v_j^4  = 1/(n-1) +|v|_4^4
$$
In conclusion $\E D_i \le 2/n +|v_i|_4^4$, and $[\E\tr (S_\pi^\top S_\pi)^{4}]^{1/4} \le \sum_{i=1}^r \theta_i [ 1/(n-1)  +|v_i|_4^4]^{1/4}$. This finishes the second moment bound. 

The overall rate at which $\tr(S_\pi^\top S_\pi)^2$ decays is $1/n$. For a.s convergence, we need the bounds to be a summable sequence; thus one can not to prove a.s convergence using only a second moment argument. This motivates us to look at the third moment.

\subsubsection{Third moment}

The third moment bounds proceeds similarly. We start with the identity
$$\tr (A^\top A)^3 =\tr A^\top AA^\top A A^\top A = \sum_{ijklmq} A_{ij} A_{kj} A_{kl} A_{ml} A_{mq} A_{iq}.$$
In this expression, the random variables with this the same second index ($j,l$ or $q$) are dependent, thus there are at most three groups of independent random variables.  There are three cases, depending on how many distinct indices there are among $j,l$ or $q$:

{\bf Three distinct indices: $j\neq l\neq q$}. In this case, we can write the sum over all $j\neq l\neq q$ as
$A_1 = \sum_{ijklmq} \E  A_{ij} A_{kj} \cdot \E A_{kl} A_{ml} \cdot \E A_{mq} A_{iq}.$
We already calculated that $\E  A_{ij} A_{kj} = v_j^2/n \cdot  [\tau+ \delta_{ik} \eta] $, where $\tau = -1/(n-1)$, $\eta = 1 - \tau = n/(n-1)$. Thus, 
$$A_1 = n^{-3} \sum_{j\neq l \neq q} v_j^2v_l^2v_q^2 \cdot  \sum_{ikm} [\tau+ \delta_{ik} \eta]  \cdot  [\tau+ \delta_{km} \eta]  \cdot  [\tau+ \delta_{mi} \eta].$$
Now, we need to evaluate
$$A_2 = \sum_{ikm} [\tau+ \delta_{ik} \eta]  \cdot  [\tau+ \delta_{km} \eta]  \cdot  [\tau+ \delta_{mi} \eta].$$
For the formal calculation, we can factor out $\tau^3$, even though $\tau$ may be 0, and so this may technically not be allowed. However, the formal calculation still leads to the correct answer. If $\tau=0$, the result is $A_2 = n\eta^3$, which agrees with what we get below. Let thus $\zeta = \eta/\tau$, and we want 
\begin{align*}
\tau^{-3}A_2  &= \sum_{ikm} [1+ \delta_{ik} \zeta]  \cdot  [1+ \delta_{km} \zeta]  \cdot  [1+ \delta_{mi} \zeta] \\
&=  \sum_{ik} [1+ \delta_{ik} \zeta]  \cdot  \sum_m [1+ \delta_{km} \zeta]  \cdot  [1+ \delta_{mi} \zeta]\\
&=  \sum_{ik} [1+ \delta_{ik} \zeta]  \cdot  [n+2\zeta +\delta_{ki} \zeta^2] \\
&=  [n+2\zeta] \sum_{ik} [1+ \delta_{ik} \zeta]+   \zeta ^2  \sum_{ik} [1+ \delta_{ik} \zeta] \delta_{ik}\\
&=  [n+2\zeta] \cdot [n^2+n \zeta]+   n \zeta^2   [1+ \zeta]\\
&=  n^3+3n^2\zeta+3n\zeta^2+n \zeta^3.
\end{align*}

Above we used that $\sum_m [1+ \delta_{km} \zeta]  \cdot  [1+ \delta_{mi} \zeta]  = n+2\zeta + \delta_{ki} \zeta^2$. Hence
$$A_2 = n^3 \cdot \tau^3+3n^2 \cdot \tau^2\eta+3n \cdot \tau\eta^2+n \cdot \eta^3 = (n\tau+\eta)^3 +(n-1)\eta^3.$$
However, we also have 
$n\tau+\eta = n\tau +1 -\tau = (n-1)\tau + 1 = 0$, and 
$(n-1)\eta^3 = n^3/(n-1)^2$. Therefore, we conclude that 
$A_2 = n^3/(n-1)^2.$
Going back to the definition of $A_1$, we thus see 
$A_1 = 1/(n-1)^2 \cdot \sum_{j\neq l \neq q} v_j^2v_l^2v_q^2.$

Now $\sum_{j\neq l \neq q} v_j^2v_l^2v_q^2 \le \sum_{jlq} v_j^2v_l^2v_q^2 = |v|_2^6 = 1$, so we conclude that 
$A_1 \le  1/(n-1)^2.$

{\bf Two distinct indices: $j= l\neq q$ and the other two symmetric cases.} In this case, we can write the sum as
$B_1 = \sum_{j\neq q, ikm} \E  A_{ij} A_{kj}^2 A_{mj} \cdot \E A_{mq} A_{iq}.$
Now, it is easy to see that the $v_j$ terms contribute a factor of at most $(\sum_j v_j^4)(\sum_j v_j^2) = \sum_j v_j^4 $. In the remainder, it is enough to work with the $u$-part. This equals 
$B_2 = \sum_{ikm} \E  \tau_i \tau_k^2 \tau_m \cdot \E \tau_i \tau_m,$ 
where $\tau$ is a random permutation of the set of values $u_1,\ldots, u_n$. Now we can sum over $k$ first to get 
$$
B_2 = \sum_{im} \E \tau_i \tau_m \cdot \sum _k \E  \tau_i \tau_k^2 \tau_m = \sum_{im} \E \tau_i \tau_m \cdot  \E [ \tau_i \tau_m (\sum _k \tau_k^2)]
$$

However, $\sum _k \tau_k^2 = \sum _k u_k^2=1$ is a deterministic quantity, so we obtain 
$B_2 = \sum_{im} (\E \tau_i \tau_m)^2$.
Now, recall that $\E \tau_i \tau_k = 1/n \cdot  [\tau+ \delta_{ik} \eta] $, where $\tau = -1/(n-1)$, $\eta = 1 - \tau.$
Therefore, 
$$
n^{2} B_2 =  n(n-1) \cdot 1/(n-1)^2+ n \cdot n^2/(n-1)^2.
$$
So $B_2 \le 3/n$ and $B_1 \le 3n^{-1} |v|_4^4$

{\bf One unique index: $j= l=q$.} In this case, we can write the sum as
$$C_1 = \sum_{j, ikm} \E  A_{ij}^2 A_{kj}^2 A_{mj}^2  
= \sum_j( \sum_{i} \E  A_{ij}^2)^3
=  \sum_j(v_j^2)^3  = \sum_j v_j^6.$$
Putting together the results from the three cases, we obtain
$$\E\tr (A^\top A)^3 \le 1/(n-1)^2 +9n^{-1} |v|_4^4 + |v|_6^6.$$
This finishes the proof.

\subsubsection{Fourth moment}

The fourth moment bounds proceeds similarly, except the calculation is more complicated. We start with the identity
$$\tr (A^\top A)^4 = \sum_{i_1i_2i_3i_4j_1j_2j_3j_4} A_{i_1j_1} 
A_{i_2j_1} A_{i_2j_2} A_{i_3j_2} A_{i_3j_3} A_{i_4j_3}A_{i_4j_4} A_{i_1j_4}.$$
As before, in this expression, the random variables with this the same second index ($j_{\cdot}$) are dependent, thus there are at most four groups of independent random variables.  There are now four cases, depending on how many distinct indices there are among them.

{\bf Four distinct indices}. In this case, we can write the sum over all $j_s$ as
$$A_1 = \sum_{i_1i_2i_3i_4j_1j_2j_3j_4} \E A_{i_1j_1} 
A_{i_2j_1} \E A_{i_2j_2} A_{i_3j_2} \E A_{i_3j_3} A_{i_4j_3} \E A_{i_4j_4} A_{i_1j_4}.$$
We already calculated that $\E  A_{ij} A_{kj} = v_j^2/n \cdot  [\tau+ \delta_{ik} \eta] $, where $\tau = -1/(n-1)$, $\eta = 1 - \tau = n/(n-1)$. 
Thus, denoting $I = (i_1,i_2,i_3,i_4)$, $J = (j_1,j_2,j_3,j_4)$, $\tilde v_J = v_{j_1}v_{j_2}v_{j_3}v_{j_4}$, $i\in I$ summation over all $i_s$,   and $j\in J$ summation over distinct $j_s$: 
$A_1 = n^{-4} \sum_{J\in S_J}\tilde v_J \cdot  A_2(J),$
where, with $\zeta = \eta/\tau$,
$$\tau^{-3} A_2(J) = \sum_{I \in S_I} \prod_{l=1}^4 [1+ \delta_{i_li_{l+1}}\zeta].$$
This equals
\begin{align*}
&
 \sum_{i_1,i_3} \sum_{i_2} [1+ \delta_{i_1i_2} \zeta] [1+ \delta_{i_3i_2} \zeta]
\cdot
 \sum_{i_4} [1+ \delta_{i_1i_4} \zeta] [1+ \delta_{i_3i_4} \zeta]
\\
&
=  \sum_{i_1,i_3} \left(\sum_{a} [1+ \delta_{i_1a} \zeta] [1+ \delta_{i_3a} \zeta]\right)^2\\
&=  \sum_{ik}  [n+2\zeta +\delta_{ik} \zeta^2]^2 \\
&=  n^4+4n^3\zeta+6n^2\zeta^2+4n\zeta^3+n \zeta^4.
\end{align*}
Here we used identities established in the previous section. This also equals $(n+\zeta)^4+(n-1)\zeta^4$. Hence $A_2 = (n\tau+\eta)^4+(n-1)\eta^3 = n^4/(n-1)^3$. Therefore,
$A_1 = 1/(n-1)^3  \sum_{J\in S_J}\tilde v_J \le  1/(n-1)^3.$

{\bf Three distinct indices: $j_1= j_2$, other $j_s$ different, and the other three symmetric cases.} In this case, we can write the sum as
$$B_1 = \sum_{i_1i_2i_3i_4;j_1j_3j_4} \E A_{i_1j_1} 
A_{i_2j_1}^2 A_{i_3j_1}
\cdot
 \E A_{i_3j_3} A_{i_4j_3} \E A_{i_4j_4} A_{i_1j_4}.$$
As before, it is easy to see that the $v_j$ terms contribute a factor of at most $(\sum_j v_j^4)(\sum_j v_j^2)^2 = \sum_j v_j^4 $. In the remainder, it is enough to work with the $u$-part. This equals 
$$B_2 = \sum_{ikml} \E  \tau_i \tau_k^2 \tau_m \cdot \E \tau_m \tau_l \cdot \E \tau_l \tau_i
=  \sum_{iml} \E  \tau_i \tau_m \cdot \E \tau_m \tau_l \cdot \E \tau_l \tau_i,$$ 
using that $\sum_k\tau_k^2=1$. However, this expression is precisely the one that came up in the calculation of the third moment bound for three distinct indices. There we saw that it equals $1/(n-1)^2$. Hence, 
$B_1 \le |v|_4^4/(n-1)^2,$
and the overall contribution of the terms with three distinct indices is four times this.

{\bf Two distinct indices: $j_1= j_2 \neq j_3=j_4$,  and the other three symmetric cases.} Here we need
$$B_1 = \sum_{i_1i_2i_3i_4;j_1j_3} \E A_{i_1j_1} 
A_{i_2j_1}^2 A_{i_3j_1}
\cdot
 \E A_{i_3j_3} A_{i_4j_3}^2 A_{i_1j_3}.$$
The $v_j$ terms contribute a factor of at most $(\sum_j v_j^4)^2 $. The $u$-part contributes 
$$B_2 = \sum_{ikml} \E  \tau_i \tau_k^2 \tau_m \cdot \E \tau_m  \tau_l^2 \tau_i
=  \sum_{im} (\E  \tau_i \tau_m)^2,$$ 
using that $\sum_k\tau_k^2=1$. In the calculation of the third moment bound for two distinct indices we saw that $B_2 \le 3/n$. Hence, 
$B_1 \le 3|v|_4^8/n.$
The overall contribution is four times this.

{\bf Two distinct indices: $j_1= j_2 =j_3 \neq j_4$,  and the other three symmetric cases.} In this case we need
$$B_1 = \sum_{i_1i_2i_3i_4;j_1j_4}  \E A_{i_1j_1} 
A_{i_2j_1}^2 A_{i_3j_1}^2 A_{i_4j_1}
\cdot \E A_{i_4j_4} A_{i_1j_4}.$$
The $v_j$ terms contribute a factor of at most $(\sum_j v_j^6) (\sum_j v_j^2) = \sum_j v_j^6$. The $u$-part contributes 
$$B_2 = \sum_{ikml} \E  \tau_i \tau_k^2 \tau_m^2  \tau_l \cdot \E \tau_l \tau_i
=  \sum_{im} (\E  \tau_i \tau_m)^2,$$ 
using that $\sum_k\tau_k^2=1$. In the third moment bound for two distinct indices we saw that $B_2 \le 3/n$. Hence, $B_1 \le 3|v|_6^6/n,$
and the overall bound is four times this.

{\bf One distinct index: $j_1= j_2 =j_3 = j_4$,  and the other three symmetric cases.} In this case, we can write the sum as
$$B_1 = \sum_{i_1i_2i_3i_4;j_1j_4}  \E A_{i_1j_1}^2
A_{i_2j_1}^2 A_{i_3j_1}^2 A_{i_4j_1}^2.$$
The $v_j$ contribute a factor of at most $\sum_j v_j^8$, while the $u$-part contributes
$\sum_{ikml} \E  \tau_i^2 \tau_k^2 \tau_m^2  \tau_l^2=  1,$ 
using that $\sum_k\tau_k^2=1$. Hence, 
$B_1 \le |v|_8^8.$

In conclusion we obtain the desired bound
$$\E\tr (A^\top A)^4 \le 1/(n-1)^3 +4/(n-1)^2 |v|_4^4 + 12n^{-1} [|v|_4^8 + |v|_6^6]+ |v|_8^8.$$

\subsection{Proof of Prop. \ref{noise_op_norm}}
\label{noise_op_norm_pf}

The first part essentially follows from \cite[][Cor 6.6]{bai2009spectral}. A small modification is needed to deal with the non-iid-ness, as explained in \cite{dobriban2017optimal}.

For the second part, we will show that $|[\tr T]^{-1/2}  XT^{1/2}|\to 1$. For this, it suffices
 to show that $|[\tr T]^{-1}  XT X^\top-  I_n| \to 0$ in probability (or a.s.). For the convergence in probability, it is in turn enough to show that $\E\tr [X\Sigma X^\top-  I_n]^2 \to 0$, where $\Sigma  = [\tr T]^{-1}  T $. We calculate
$$A = \E \tr [X\Sigma X^\top-  I_n]^2 = \E\tr [X\Sigma X^\top]^2 - 2 \E\tr X\Sigma X^\top +n.$$
Now $X\Sigma X^\top  = \sum_{j=1}^p \sigma_j x_j x_j^\top $, where the $x_j$ are independent $n\times 1$ random vectors whose entries are iid random variables (whose distribution may depend on $j$). They collect the $j$-th coordinates of the observed data. So, $\E \tr X\Sigma X^\top  = \sum_{j=1}^p \sigma_j  \E | x_j|^2  = n  \sum_{j=1}^p \sigma_j = n.$
Also, $$\E\tr [X\Sigma X^\top]^2 
= \E \tr [\sum_{j=1}^p \sigma_j x_j x_j^\top][\sum_{k=1}^p \sigma_k x_k x_k^\top] 
=\sum_{j,k=1}^p \sigma_j\sigma_k  \E [x_j^\top x_k]^2.$$
To evaluate this expression, we need to find $\E [x_j^\top x_k]^2$. If $j \neq k$, then $x_j$ and $x_k$ are independent, and we can take expectation over $j$ first, to get 
$\E [x_j^\top x_k]^2 =\E \tr [x_j x_j^\top x_k x_k^\top] = \E \tr [x_k x_k^\top] = n.$
This leads to 
$$\E\tr [X\Sigma X^\top]^2 
=n \sum_{j,k=1}^p \sigma_j\sigma_k + \sum_{j=1}^p \sigma_j^2 [\E |x_j|^4 - n].$$
Therefore we find that $A  = \sum_{j=1}^p \sigma_j^2 [\E |x_j|^4 - n]$. Thus, we need to show that $A\to 0$. Since $\sigma_j \le C p^{-1}$ for all $j$, 
$$A = p^{-2} \sum_{i=1}^n \sum_{j=1}^p (\E x_{ij}^4 - 1) \le p^{-2} \cdot Cnp = C n/p \to 0$$
 if $n/p \to 0$, since the 4-th moments are bounded.  This shows that $n/p\to0$ guarantees convergence in probability, and finishes the proof of (2A). If in addition $n/p \le 1/n^{1+\ep}$, then by the Borel-Cantelli lemma we conclude that $|[\tr T]^{-1}  XT X^\top-  I_n| \to 0$ a.s., as needed. This  finishes the proof of (2B). Therefore, the proof of the proposition is complete. 
 

{\small
\setlength{\bibsep}{0.2pt plus 0.3ex}
\bibliographystyle{plainnat-abbrev}
\bibliography{references}
}

\end{document}